\theoremstyle{plain}
\newtheorem{thm}[subsection]{Theorem}
\newtheorem{lem}[subsection]{Lemma}
\newtheorem{cor}[subsection]{Corollary}
\theoremstyle{definition}
\newtheorem{ex}[subsection]{Example}
\theoremstyle{remark} 
\newtheorem{rem}{Remark} 
\def\deg{{\mathrm{deg}}}
\def\KK{{\mathbb{K}}}
\def\AA{{\mathbb{A}}}
\def\PP{{\mathbb{P}}}
\def\UU{{\mathbb{U}}}
\def\ZZ{{\mathbb{Z}}}
\def\QQ{{\mathbb{Q}}}
\def\Res{{\mathrm{Res}}}
\def\Disc{{\mathrm{Disc}}}
\def\Spec{{\mathrm{Spec}}}
\def\Rc{{\mathcal{R}}}
\def\Ec{{\mathcal{E}}}
\def\Dc{{\mathcal{D}}}
\begin{document}

\title{Formulas for the eigendiscriminants of ternary and quaternary forms}
\author{Laurent Bus\'e}
\address{Universit\'e C\^ote d'Azur, Inria, 2004 route des Lucioles, 06902 Sophia Antipolis, France.}
\email{laurent.buse@inria.fr}
\date{\today}

\subjclass[2010]{13C40,13P15,14M12,15A69}

\keywords{Tensors, resultants, discriminants, invariants}

\begin{abstract}
A $d$-dimensional tensor $A$ of format $n\times n\times \cdots \times n$ defines naturally a rational map $\Psi$ from the projective space $\PP^{n-1}$ to itself and its eigenscheme is then the subscheme of $\PP^{n-1}$ of fixed points of $\Psi$. The eigendiscriminant is an irreducible polynomial in the coefficients of $A$ that vanishes for a given tensor if and only if its eigenscheme is singular. In this paper, we contribute two formulas for the computation of eigendiscriminants in the cases $n=3$ and $n=4$. In particular, by restriction to symmetric tensors, we obtain closed formulas for the eigendiscriminants of plane curves and surfaces in $\PP^3$ as the ratio of some determinants of resultant matrices.
\end{abstract}

\maketitle

\section{Introduction}\label{sec:intro}

The geometric counterpart of a square matrix $A$ of size $n\times n$ with entries in a field $\KK$ is a projective linear rational map 
\begin{eqnarray*}
	\Psi : \PP^{n-1}_\KK & \dasharrow & \PP^{n-1}_\KK \\
	(x_0:\cdots: x_{n-1}) & \mapsto & (\psi_{0}:\cdots\psi_{n-1}),
\end{eqnarray*}
where the linear forms $\psi_0,\ldots,\psi_{n-1}$ are obtained by right-multiplying $A$ with the column-vector $(x_0,\ldots,x_{n-1})^T$.
The eigenvectors of $A$ then correspond to the scheme of fixed points of $\Psi$, called the \emph{eigenscheme} of $A$, which is the subscheme of $\PP^{n-1}$ defined by the $2$-minors of the matrix
\begin{equation}\label{eq:2minors}
\left( 
\begin{array}{cccc}
	\psi_0 & \psi_1 & \cdots & \psi_{n-1} \\
	x_0 & x_1 & \cdots & x_{n-1}
\end{array}
\right).
\end{equation}
The extension to tensors is a topic that received recently a lot of interest, in particular with the development of algebraic techniques for studying the spectral theory of tensors (see \cite{ASS17} and the references therein). More precisely, if A is a $d$-dimensional tensor of format $n\times n\times\cdots \times n$, it also defines a rational map $\Psi$ from $\PP^{n-1}$ to itself but the polynomials $\psi_0,\ldots,\psi_{n-1}$ are now homogeneous polynomials of degree $d-1$ in $\KK[x_0,\ldots,x_{n-1}]$ (see \eqref{eq:tensor->map} for a precise statement). The eigenscheme of $A$ is again the subscheme of $\PP^{n-1}$ that corresponds to the fixed points of $\Psi$, i.e.~that is defined by the $2$-minors of \eqref{eq:2minors}. For a general tensor it is known that its eigenscheme is a zero-dimensional reduced subscheme of degree $\sum_{i=0}^{n-1}(d-1)^i$ (see \cite[Theorem 2.1]{ASS17}). For a more specific choice of tensor, the eigenscheme can be non-reduced or of positive dimension and hence it appears natural to study the discriminant of the eigenschemes of tensors of the same dimension and format. Such a discriminant has been recently introduced in \cite{ASS17} and called the \emph{eigendiscriminant}. It is a homogeneous polynomial in the coefficients of $d$-dimensional tensors of format $n\times\cdots\times n$ that vanishes for a given tensor $A$ if and only if its eigenscheme is non-reduced or of positive dimension. It is denoted by $\Delta_{n,d}$ and is of degree $n(n-1)(d-1)^{n-1}$ (see \cite[Corollary 4.2]{ASS17}).

From a geometric point of view, symmetric tensors of dimension $d$ and format $n\times n\times\cdots \times n$ are of particular interest. Indeed, they are in correspondence with homogeneous polynomials $\phi$ of degree $d$ in the variables $x_0,\ldots,x_{n-1}$. They are also in correspondence with the rational maps $\Psi$ defined by $\psi_i=\partial\phi/\partial x_i$, $i=0,\ldots,n-1$, for some polynomial $\phi$.  The eigenscheme of a symmetric tensor is hence the scheme of fixed points of the polar map of the corresponding polynomial $\phi$, in particular it contains the singular points of the hypersurface defined by the equation $\phi=0$.

\medskip

In this paper, we are interested in the computation of eigendiscriminants. Indeed, if the geometric properties of eigendiscriminants have already been explored (see \cite{ASS17,AboPreprint}), their computation in closed forms remains an open question. For instance, as noted in \cite[Example 4.5]{ASS17}, the computation of the eigendiscriminant $\Delta_{3,3}$ of a plane cubic, which is a degree 24 hypersurface in a $\PP^{9}$, is already a hard task (see also Example \ref{ex:ASS}). Even more difficult is the case of the eigendiscriminant $\Delta_{4,3}$ of a cubic surface in $\PP^3$. This is a polynomial of degree $96$ in a $\PP^{19}$. The problem of finding a closed formula for $\Delta_{4,3}$ is actually one of the ten open problems on cubic surfaces that are listed in  \cite{RaSt} (see \cite[Question 17]{RaSt}).

The main contributions of this paper are two closed formulas for the computation of the eigendiscriminants $\Delta_{3,d}$ and $\Delta_{4,d}$;  see Theorem \ref{thm:n=3} and Theorem \ref{thm:n=4}. Our approach can be seen as a generalization of the case $n=2$. In this case, the eigendiscriminant coincides with the classical discriminant of the minor $\delta:=x_0\psi_1(x_0,x_1)-x_1\psi_0(x_0,x_1)$:
$$\Disc(\delta)=\Delta_{2,d}(\Psi).$$
This property follows from the fact that the eigenscheme of a general tensor is a zero-dimensional complete intersection in this case. But when $n>2$, these eigenschemes are no longer complete intersections; they are determinantal schemes defined by the minors of \eqref{eq:2minors}. Thus, our strategy is to link these determinantal schemes to complete intersections that are defined by some $n-1$ minors of \eqref{eq:2minors}. The eigendiscriminant is then an irreducible component of such a discriminant and the main difficulty is to determine the other excess irreducible components in this complete intersection. This approach is motivated by the fact that the theory of discriminants of zero-dimensional complete intersections in a projective space is rich and offers an extensive formalism; see \cite{Krull1,Krull2,BJ14}. More broadly, we notice that computing some resultants or discriminants as factors of some other ones is a commonly used strategy in elimination theory, as for instance the computation of the hyperdeterminant of multi-dimensional matrices of certain formats, which is known as the Schl\"afli's method (see \cite{WZ96,Ott13}, and more generally \cite{GKZ}). 

\medskip

The paper is organized as follows. In Section \ref{sec:prelim} we briefly review some background material on resultants and discriminants that are used in this paper. Section \ref{sec:ternary} is devoted to the computation of the eigendiscriminant $\Delta_{3,d}$ of $d$-dimensional tensors of format $3\times 3 \times \cdots \times 3$. The main result, Theorem \ref{thm:n=3}, yields the irreducible decomposition of the discriminant of two minors of the matrix \eqref{eq:2minors} in this case. The consequences for the computation of  eigendiscriminants of plane cubics are also discussed. Finally, Section \ref{sec:quaternary} deals with the eigendiscriminant $\Delta_{4,d}$ of $d$-dimensional tensors of format $4\times 4 \times \cdots \times 4$. Similarly to Section \ref{sec:ternary}, the main result is the irreducible decomposition of the discriminant of three minors of the matrix \eqref{eq:2minors} (see Theorem \ref{thm:n=4}). In particular, this decomposition yields a closed formula for computing the eigendiscriminant of a cubic surface in $\PP^3$ as the ratio of some resultants and discriminants. 

\section{Notations and background material}\label{sec:prelim}

In order to fix our notation, we briefly review the definitions of resultants and of discriminants of zero-dimensional complete intersections in a projective space.   

\subsection{Resultants}

Let $d_0,\ldots,d_{n}$ be a sequence of positive integers and consider the generic homogeneous polynomials of degree $d_0,\ldots,d_{n}$   
in the variables $x_0,\ldots,x_n$:
$$f_i(x_0,\ldots,x_n)=\sum_{\alpha_0+\cdots +\alpha_n=d_i}u_{i,\alpha}x_0^{\alpha_0}x_1^{\alpha_1}\ldots x_n^{\alpha_n}, \ \ 
i=0,\ldots,n.$$
Let ${\UU}:=\ZZ[u_{i,\alpha}: i=0,\ldots,n, \alpha_0+\cdots +\alpha_n=d_i]$ be the universal ring of coefficients of the $f_i$'s. Thus, the polynomials $f_0,\ldots,f_n$ belong to the polynomial ring ${\UU}[x_0,\ldots,x_n]$ and one can show that the elimination ideal, also called the ideal of inertia forms, 
$$\left( (f_0,\ldots,f_n) : (x_0,\ldots,x_n)^\infty \right)\cap \UU$$
is a prime and principal ideal of $\UU$. The resultant is then defined as the unique generator, denoted $\Res_{d_0,\ldots,d_n}$ or simply $\Res$, of this ideal which satisfies the equality
\begin{equation*}
 \Res(x_0^{d_0},\ldots,x_n^{d_n})=1.
\end{equation*}
This notation means that the polynomial $\Res$ is equal to 1 when the coefficients $u_{i,\alpha}$ are specialized in such a way that each polynomial $f_i$ is specialized to $x_i^{d_i}$ for all $i$. 

The resultant of any given $(n+1)$-tuples of homogeneous polynomials is defined by specialization of $\Res$. More precisely, let $R$ be a commutative ring and $g_0,\ldots,g_n$ be homogeneous polynomials in $R[x_0,\ldots,x_n]$ of degree $d_0,\ldots,d_n$ respectively: 
    $$g_i=\sum_{\alpha_0+\cdots +\alpha_n=d_i}c_{i,\alpha}x_0^{\alpha_0}x_1^{\alpha_1}\ldots x_n^{\alpha_n}, \ \ 
i=0,\ldots,n.$$
The morphism of rings $\theta:{\UU}\rightarrow R:
    u_{j,\alpha} \mapsto c_{j,\alpha}$ corresponds to the
    specialization of the generic polynomials $f_0,\ldots,f_n$ to the polynomials
    $g_0,\ldots,g_n$ and the resultant of $g_0,\ldots,g_n$ is then defined by  
	$$\Res(g_0,\ldots,g_n):=\theta(\Res) \in R$$
(in particular observe that we have  $\Res=\Res(f_1,\ldots,f_n) \in \UU$).

The resultant of polynomials with coefficients in a field $\KK$ vanishes if and only if these polynomials have a common projective root over the algebraic closure of $\KK$. In addition, $\Res(f_0,\ldots,f_n)$ is homogeneous with respect to the coefficients of each polynomial $f_i$ of degree $d_0\ldots d_n/d_i$. Resultants have numerous properties  and there exists an extensive literature on this topic; we refer the reader to \cite{Jou91,GKZ,CLO} and the references therein. For instance, in what follows we will use several times the multiplicativity property of the resultant: if $g_0,g_0',g_1,\ldots,g_n$ are homogeneous polynomials with coefficients in any commutative ring then (see \cite[\S 5.7]{Jou91})
$$\Res(g_0g_0',g_1,\ldots,g_n)=\Res(g_0,g_1,\ldots,g_n)\Res(g_0',g_1,\ldots,g_n).$$
Another important point to notice is that several formulas are known to compute resultants (see for instance~\cite{Mac02}, \cite{Jou97},  \cite{GKZ}, \cite{CLO}, \cite{DaDi01}, and the references therein), the more classical one being the Macaulay formula that allows to compute the resultant as the ratio of two determinants; see also \cite{DaDi01} for a generalized and more compact version. 

\subsection{Discriminants of zero-dimensional complete intersections} Consider 
the generic homogeneous polynomials $f_0,\ldots,f_{n-1}$ of degree $d_0,\ldots,d_{n-1}$ in the variables $x_0,\ldots,x_n$ and denote by $\UU$ their universal ring of coefficients. We assume that $\sum_{i}(d_i-1)>0$. The Jacobian matrix of these polynomials has $n$ rows and $n+1$ columns. We denote by  $J_i(f_0,\ldots,f_{n-1})$ its signed $n$-minor which corresponds to removing the column number $i+1$, i.e.~the column that depends on the partial derivatives of the $f_j$'s with respect to the variable $x_i$. Then, the discriminant of $f_0,\ldots,f_{n-1}$, denoted $\Disc_{d_0,\ldots,d_{n-1}}$ or simply $\Disc$, is defined as the unique polynomial in $\UU$ which satisfies (one of) the equalities (see \cite[\S 3.1.2]{BJ14}):
\begin{equation}\label{eq:defDisc}
\Res(f_0,\ldots,f_{n-1},J_i)= \Disc\cdot \Res(f_0,\ldots,f_{n-1},x_i), \ i=0,\ldots,n.	
\end{equation}
From this definition, we deduce that for all $i$, $\Disc$ is homogeneous with respect to the coefficients of $f_i$ of degree 
\begin{equation}\label{eq:partialdegdisc}
\frac{d_0\ldots d_{n-1}}{d_i}\left((d_i-1) + \sum_{j=0}^{n-1}(d_j-1)\right).	
\end{equation}

As for resultants, the discriminant of homogeneous polynomials  $g_0,\ldots,g_{n-1}$ in $R[x_0,\ldots,x_n]$, $R$ being any commutative ring, is defined by specialization of the universal discriminant $\Disc$ and is denoted by $\Disc(g_0,\ldots,g_{n-1})$. This discriminant has many formal properties for which we refer the reader to \cite[Section 3]{BJ14} and the references therein. For instance, a property that we will use several times in this paper is the polarization formula: given homogeneous polynomials $g_0,g_0',g_1,\ldots,g_{n-1}$ of degree $d_0,d_0',d_1,\ldots,d_{n-1}$ we have
\begin{multline*}
\Disc(g_0g_0',g_1,\ldots,g_{n-1})=(-1)^{d_0d_0'd_1\ldots d_{n-1}}\Disc(g_0,g_1,\ldots,g_{n-1})\Disc(g_0',g_1,\ldots,g_{n-1}) \times \\ \Res(g_0,g_0',g_1,\ldots,g_{n-1})^2.	
\end{multline*}
We also emphasize that the definition of these discriminants via \eqref{eq:defDisc} gives explicit formulas to compute them by means of formulas  for computing resultants, as for instance the Macaulay formula.    

From a geometrical point of view, discriminants have the following interpretation: suppose we are given polynomials $g_0,\ldots,g_{n-1}$ with coefficients in an algebraically closed field $\KK$. These polynomials define $n$ hypersurfaces in $\PP^n_\KK$ and their discriminant vanishes if and only if they do not define a smooth zero-dimensional complete intersection in $\PP^n_\KK$. In other words, the universal discriminant $\Disc$ characterizes the collection of $n$ hypersurfaces of given degrees which do not form a smooth zero-dimensional complete intersection in $\PP^n_\KK$ \cite{BJ14,GKZ,Benoist}. 

\subsection{Eigendiscriminants}\label{subsec:eigdisc}
Consider a $d$-dimensional tensor $A={(a_{i_1,\ldots,i_d})}_{0\leq i_j\leq n-1}$ of format $n\times n\times\cdots \times n$. It defines the following rational map 
\begin{eqnarray*}
\Psi : \PP^{n-1} & \dasharrow & \PP^{n-1}\\
(x_0:\cdots:x_{n-1}) & \mapsto & (\psi_0:\ldots,\psi_{n-1})
\end{eqnarray*}
where the polynomials $\psi_0,\ldots,\psi_{n-1}$ are homogeneous polynomials of degree $d-1$:
\begin{equation}\label{eq:tensor->map}
\psi_i(x_0,\ldots,x_{n-1})= \sum_{j_2=0}^{n-1}\sum_{j_3=0}^{n-1}\cdots \sum_{j_d=0}^{n-1} a_{ij_2j_3\ldots j_d}x_{j_2}x_{j_3}\ldots x_{j_d}, \ i=0,\ldots,n-1.	
\end{equation}
As explained in Section \ref{sec:intro}, the eigendiscriminant of $A$ is the discriminant of the subscheme of $\PP^{n-1}$ defined by the fixed points of $\Psi$. By definition, the eigendiscriminant actually depends on the coefficients of $\Psi$ and therefore from now on we will refer to the eigendiscriminant of a rational map rather than a tensor. We also notice that in general there is not a unique tensor that yields a given rational map $\Psi$, but this is the case if one considers symmetric tensors.

Denote by $\UU$ the universal ring of coefficients of rational maps from $\PP^{n-1}$ to itself defined by homogeneous polynomials of degree $d-1$ (this is the universal ring of coefficients of the polynomials $\psi_0,\ldots,\psi_{n-1}$). As proved in \cite[Theorem 4.1]{ASS17}, the eigendiscriminant $\Delta_{n,d}$ is an irreducible homogeneous polynomial in $\UU$ of degree $n(n-1)(d-1)^{n-1}$. Similar to resultants and discriminants, the eigendiscriminant of any rational map $\Phi$ of the same format with coefficients in any commutative ring $R$ is defined by specialization of $\Delta_{n,d}$ and is denoted by $\Delta_{n,d}(\Phi) \in R$. We notice that $\Delta_{n,d}$ is only defined up to sign by the above property.

\medskip

In what follows, we will focus on the cases $n=3,4$ and provide universal formulas that relate eigendiscriminants, resultants and discriminants. Such formulas provide computational tools for the eigendiscriminants and could actually serve as alternative definitions. In particular, they allow their evaluation in many point (i.e.~rational maps) without ambiguity of sign or multiplicative constant, which is a very important property if one aims to compute classes of eigendiscriminants by means of interpolation methods.

\section{Eigendiscriminants of ternary forms}\label{sec:ternary}

In this section we focus on the eigendiscriminant $\Delta_{3,d}$ of rational maps $\Psi$ from $\PP^2$ to $\PP^2$ defined by three homogeneous polynomials of degree $d-1$; this setting corresponds to $d$-dimensional tensors $A$ of format $3\times 3\times \cdots \times 3$. 

Consider the generic  rational map
\begin{eqnarray*}
	\Psi : \PP^2 & \dasharrow & \PP^2 \\
	(x_0:x_1:x_2) & \mapsto & (\psi_0:\psi_1:\psi_2)
\end{eqnarray*}
where $\psi_0,\psi_1,\psi_2$ are generic homogeneous polynomials of degree $d-1$. We assume $d\geq 3$ and denote by $\UU$ the universal ring of coefficients of the $\psi_i$'s. The eigenscheme of $A$ is the subscheme in $\PP^2_{\UU}$ defined by the 2-minors of the matrix
\begin{equation}\label{eq:psi-matrix}
\left(
\begin{array}{ccc}
	\psi_0 & \psi_1 & \psi_2 \\
	x_0 & x_1 & x_2
\end{array}
\right).	
\end{equation}
We denote them by 
$$\delta_0=
\det \left( 
\begin{array}{cc}
	\psi_1 & \psi_2  \\
	x_1 & x_2 
\end{array}
\right), \ 
\delta_1=
\det \left( 
\begin{array}{cc}
	\psi_0 & \psi_2  \\
	x_0 & x_2 
\end{array}
\right), \ 
\delta_2=
\det \left( 
\begin{array}{cc}
	\psi_0 & \psi_1  \\
	x_0 & x_1
\end{array}
\right).
$$

The eigenscheme of $\Psi$ is clearly contained in the complete intersection schemes defined by any two of these minors, say $\delta_0$ and $\delta_1$. It follows that $\Delta_{3,d}(\Psi)$ is an irreducible factor of the discriminant $\Disc(\delta_0,\delta_1)$. The next result provides the complete decomposition into irreducible factors of this discriminant. 

Given a homogeneous polynomial $p$ in the variables $x_0,x_1,x_2$ and an integer $k\in \{0,1,2\}$, the notation $\overline{p}^{(k)}$ denotes the homogeneous polynomial $p(x_k=0)$ in two variables obtained by specializing $x_k$ to 0 in $p$.

\begin{thm}\label{thm:n=3} Let $i,j,k$ be integers such that $\{i,j,k\}=\{0,1,2\}$. Then, we have the following equality in the universal ring $\UU$ of coefficients of $\Psi$: 
	$$\Disc(\delta_i,\delta_j)=\Delta_{3,d}(\Psi) \, \Disc\left(\overline{\psi_k}^{(k)}\right) \, \Res\left(\overline{\delta_k}^{(k)},\overline{\psi_k}^{(k)} \right)^2.$$	
\end{thm}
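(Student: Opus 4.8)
The plan is to compute $\Disc(\delta_i,\delta_j)$ by analysing the singular locus of the complete intersection $V(\delta_i,\delta_j)\subset\PP^2_\UU$ and identifying each irreducible component, then matching degrees to rule out spurious multiplicities. By symmetry of the statement under permuting $\{0,1,2\}$, it suffices to treat one case, say $i=0$, $j=1$, $k=2$; I will write $\delta_2$ for the ``missing'' minor and use the Plücker-type syzygy $x_0\delta_0 - x_1\delta_1 + x_2\delta_2 = 0$ (the expansion of a $3\times 3$ determinant with a repeated row), which shows that on the open set $\{x_2\neq 0\}$ the ideals $(\delta_0,\delta_1)$ and $(\delta_0,\delta_1,\delta_2)$ agree, so the eigenscheme and the complete intersection $V(\delta_0,\delta_1)$ differ only along the line $\{x_2=0\}$. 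Hence $\Disc(\delta_0,\delta_1)$ should factor as $\Delta_{3,d}(\Psi)$ times a contribution supported on $\{x_2=0\}$, and the whole game is to compute that contribution.

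First I would set up the Jacobian/geometric criterion: $\Disc(\delta_0,\delta_1)$ vanishes exactly when $V(\delta_0,\delta_1)$ fails to be a smooth zero-dimensional complete intersection in $\PP^2$, and the defining equations \eqref{eq:defDisc} let me compute it through resultants. The key computational step is to intersect with the line $L=\{x_2=0\}$: on $L$, $\delta_0|_L = -x_1\overline{\psi_2}^{(2)}$ and $\delta_1|_L = -x_0\overline{\psi_2}^{(2)}$ (reading off the $2\times 2$ determinants with $x_2=0$), while $\delta_2|_L = \overline{\delta_2}^{(2)}$. So the scheme-theoretic intersection of $V(\delta_0,\delta_1)$ with $L$ is supported on $V(\overline{\psi_2}^{(2)})\cup\{x_0=x_1=0\}$, and the first of these contributes via the non-reducedness / singularity of $\overline{\psi_2}^{(2)}$ inside $L$, producing the factor $\Disc(\overline{\psi_2}^{(2)})$, while the residual behaviour at the base points $V(\psi_0,\psi_1,\psi_2)\cap L$ — which lie on $\overline{\delta_2}^{(2)}=0$ and on $\overline{\psi_2}^{(2)}=0$ simultaneously — contributes the factor $\Res(\overline{\delta_2}^{(2)},\overline{\psi_2}^{(2)})^2$, the square arising because a base point of the linear system forces the complete intersection to be singular there to second order. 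I would make this precise by a restriction/residual-intersection argument (e.g.\ a Bézout-type decomposition of $\Res(\delta_0,\delta_1,J_i)$ along the divisor $x_2$, using multiplicativity of the resultant and the polarization formula for discriminants).

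To pin down the exact formula with no leftover factor or unknown constant, I would compare partial degrees in the coefficients of $\Psi$ on both sides, using \eqref{eq:partialdegdisc}: the left-hand side $\Disc(\delta_0,\delta_1)$ has a known bidegree in the coefficients of each $\psi_\ell$ (computed from the degrees $d-1$ and the formula for $\delta$'s, noting $\deg\delta_0=\deg\delta_1 = d$), $\Delta_{3,d}$ has degree $6(d-1)^2$ with a known distribution over the $\psi_\ell$'s, and the two boundary factors have degrees read off from the degree of $\overline{\psi_2}^{(2)}$ (namely $d-1$) and $\overline{\delta_2}^{(2)}$ (namely $d$). Matching these, together with the fact that all four polynomials are irreducible or products of known irreducibles over $\UU$ (irreducibility of $\Delta_{3,d}$ is cited from \cite[Theorem 4.1]{ASS17}), forces the equality up to sign; a specialization to a convenient diagonal-type tensor (where everything is computable in closed form, e.g.\ $\psi_\ell = x_\ell^{d-1}$ up to perturbation) fixes the sign and confirms no extra constant. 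The main obstacle I anticipate is the bookkeeping of the excess component along $\{x_2=0\}$: showing rigorously that its contribution to the discriminant is \emph{exactly} $\Disc(\overline{\psi_2}^{(2)})\cdot\Res(\overline{\delta_2}^{(2)},\overline{\psi_2}^{(2)})^2$ and not merely something with the same support requires either a careful local analysis at the base points of the polar-type system (to get the exponent $2$ right) or a clever global factorization of the resultant $\Res(\delta_0,\delta_1,J_i)$ using the syzygy $x_0\delta_0-x_1\delta_1+x_2\delta_2=0$ to split off the $x_2$-divisor; the degree count then serves as the crucial consistency check that the decomposition is complete.
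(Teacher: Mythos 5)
Your route is genuinely different from the paper's. The paper never analyses the geometry of $V(\delta_0,\delta_1)$ directly: it computes $\Disc(x_0\delta_0,\delta_1)$ in two ways --- once by polarization, once after replacing $x_0\delta_0$ by $\pm x_2\delta_2$ via the syzygy --- and reads off from the resulting identity in the UFD $\UU$ that the product $\Disc\bigl(\overline{\psi_2}^{(2)}\bigr)\Res\bigl(\overline{\delta_2}^{(2)},\overline{\psi_2}^{(2)}\bigr)^2$ divides $\Disc(\delta_0,\delta_1)$; the exponent $2$ comes for free from the polarization formula rather than from any local analysis. Your plan (identify the components of the discriminant hypersurface, then match degrees) is viable in outline --- and indeed the degree count does pin down the exponents uniquely once the component list is known to be complete, since $6(d-1)^2a+2(d-2)b+(2d-1)e=6d(d-1)$ with $a,b,e\geq 1$ forces $a=b=1$, $e=2$ for $d\geq 3$ --- but as written it has three concrete problems.

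First, you misidentify the geometric source of the resultant factor. The points of $L=\{x_2=0\}$ where $\overline{\delta_2}^{(2)}$ and $\overline{\psi_2}^{(2)}$ vanish simultaneously are \emph{not} the base points $V(\psi_0,\psi_1,\psi_2)\cap L$ (that is a codimension-$2$ condition on $\Psi$, governed by $\Res(\psi_0,\psi_1,\psi_2)$, which is not a factor of $\Disc(\delta_0,\delta_1)$); the relevant codimension-$1$ locus is where the excess scheme $V(x_2,\psi_2)$ meets the eigenscheme, i.e.\ where the eigenscheme touches $L$ at a zero of $\overline{\psi_2}^{(2)}$. A local analysis aimed at base points would miss this component entirely. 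Second, the completeness of the component list is the load-bearing step, not a ``consistency check'': you must show that if the eigenscheme is reduced of dimension $0$, $\overline{\psi_2}^{(2)}$ is squarefree, and $\Res\bigl(\overline{\delta_2}^{(2)},\overline{\psi_2}^{(2)}\bigr)\neq 0$, then $V(\delta_0,\delta_1)$ is a smooth complete intersection (locally at $p\in L$ with $\delta_2(p)\neq 0$ the syzygy gives $(\delta_0,\delta_1)=(\delta_0,x_2)=(\psi_2,x_2)$, which does the job, but this has to be carried out); you also need irreducibility of $\Res\bigl(\overline{\delta_2}^{(2)},\overline{\psi_2}^{(2)}\bigr)$ in $\UU$, which you do not address. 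Third, your proposed normalization fails: under $\psi_\ell=x_\ell^{d-1}$ one has $\overline{\psi_2}^{(2)}=0$, so $\delta_0$ and $\delta_1$ share the component $x_2=0$ and both sides of the identity vanish. Moreover a geometric component argument only yields the identity up to a rational constant, so to get $c=\pm 1$ over $\ZZ$ you need a primitivity argument; the paper does this by specializing $\psi_1\mapsto 0$ and exhibiting $\Disc(\delta_0,\delta_1)$ as a product of primitive polynomials.
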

\begin{proof} For the sake of simplicity, we prove this formula in the case $i=0$, $j=1$, $k=2$, the other cases being obtained in a similar way. 
	
	First, we consider the discriminant of $x_0\delta_0$ and $\delta_1$ and we decompose it as follows (see \cite[Section 3]{BJ14} for the properties of discriminants):
	\begin{align*}
	\Disc(x_0\delta_0,\delta_1) &= (-1)^{d^2}\Disc(x_0,\delta_1)\Disc(\delta_0,\delta_1)\Res(x_0,\delta_0,\delta_1)^2\\
	&= (-1)^{d^2+d}\Disc(-x_2\psi_0(0,x_1,x_2)) \Disc(\delta_0,\delta_1) \\  
	&  \hspace{2cm} \times \Res(x_2\psi_1(0,x_1,x_2)-x_1\psi_2(0,x_1,x_2),-x_2\psi_0(0,x_1,x_2))^2\\
	&= (-1)^{d-1}\Disc(\psi_0(0,x_1,x_2)) \, \psi_0(0,1,0)^2 \, \Disc(\delta_0,\delta_1) \\  
	&  \hspace{2cm} \times \psi_2(0,1,0)^2 \, \Res(x_2\psi_1(0,x_1,x_2)-x_1\psi_2(0,x_1,x_2),\psi_0(0,x_1,x_2))^2.
	\end{align*}
	On the other hand, since $x_0\delta_0+x_1 \delta_1+x_2\delta_2=0$, we also have
	\begin{align*}
	 \Disc(x_0\delta_0,\delta_1) &= \Disc(-x_1\delta_1-x_2\delta_2,\delta_1)=\Disc(-x_2\delta_2,\delta_1)= \Disc(x_2\delta_2,\delta_1)\\
	  &= (-1)^{d^2}\Disc(x_2,\delta_1)\Disc(\delta_2,\delta_1) \Res(x_2,\delta_2,\delta_1)^2\\
	  &= (-1)^{d^2+d} \Disc(x_0\psi_2(x_0,x_1,0))\Disc(\delta_1,\delta_2)  \\
	  & \hspace{2cm} \times \psi_0(0,1,0)^2 \, \Res(x_1\psi_0(x_0,x_1,0)-x_0\psi_1(x_0,x_1,0),\psi_2(x_0,x_1,0))^2 \\
	  &=  (-1)^{d-1} \psi_2(0,1,0)^2 \, \Disc(\psi_2(x_0,x_1,0)) \Disc(\delta_1,\delta_2) \\
 & \hspace{2cm} \times \psi_0(0,1,0)^2 \, \Res(x_1\psi_0(x_0,x_1,0)-x_0\psi_1(x_0,x_1,0),\psi_2(x_0,x_1,0))^2.
	\end{align*}
Therefore, from these two computations we obtain the following equality in $\UU$ (which is a UFD):
\begin{multline}\label{eq:n=3-D01D12}
\Disc\left(\overline{\psi_0}^{(0)}\right)\Disc(\delta_0,\delta_1)
\Res\left(x_2\overline{\psi_1}^{(0)}-x_1\overline{\psi_2}^{(0)},\overline{\psi_0}^{(0)}\right)^2 \\ = \Disc\left(\overline{\psi_2}^{(2)}\right)\Disc(\delta_1,\delta_2)
\Res\left(x_1\overline{\psi_0}^{(2)}-x_0\overline{\psi_1}^{(2)},\overline{\psi_2}^{(2)}\right)^2.
\end{multline}

Inspecting \eqref{eq:n=3-D01D12}, we first notice that for all $k$ the discriminant $\Disc(\overline{\psi_k}^{(k)})$ is an irreducible polynomial because $\overline{\psi_k}^{(k)}$ is a generic polynomial. In addition, these discriminants are also distinct because they do not depend on the same variables. The resultant
\begin{equation}\label{eq:ResIrred}
	\Res\left(x_1\overline{\psi_0}^{(2)}-x_0\overline{\psi_1}^{(2)},\overline{\psi_2}^{(2)}\right)
\end{equation}
is also irreducible because one can choose a specialization of $\psi_0$ and $\psi_1$ such that the polynomial  $x_1\overline{\psi_0}^{(2)}-x_0\overline{\psi_1}^{(2)}$ in two variables $x_0,x_1$ is generic. 
The same irreducibility property holds for the other resultant appearing in equation \eqref{eq:n=3-D01D12} and hence, from all these considerations we deduce that the product of irreducible and coprime polynomials 
\begin{equation}\label{eq:pd-discres2}
\Disc\left(\overline{\psi_2}^{(2)}\right)\Res\left(x_1\overline{\psi_0}^{(2)}-x_0\overline{\psi_1}^{(2)},\overline{\psi_2}^{(2)}\right)^2
\end{equation}  
divides the discriminant $\Disc(\delta_0,\delta_1)$ in $\UU$. In addition, the eigendiscriminant $\Delta_{3,d}$ also divides $\Disc(\delta_0,\delta_1)$ by definition, and since it is irreducible we deduce that it is another irreducible factor of $\Disc(\delta_0,\delta_1)$.

Now, $\Disc(\delta_0,\delta_1)$ is a homogeneous polynomial in $\UU$ of degree $6(d-1)d$. The discriminant 
$\Disc\left(\overline{\psi_2}^{(2)}\right)$ is homogeneous in the coefficients of $\psi_2$, hence of $\Psi$, of degree $2(d-2).$ The resultant 
$$\Res\left(x_1\overline{\psi_0}^{(2)}-x_0\overline{\psi_1}^{(2)},\overline{\psi_2}^{(2)} \right)$$ 
is homogeneous in the coefficients of $\psi_2$ of degree $d$ and in the coefficients of $\psi_0$ and $\psi_1$ of degree $d-1$, hence, it is homogeneous of degree $2d-1$ in the coefficients of $\Psi$. Finally, the eigendiscriminant $\Delta_{3,d}(\Psi)$ is of degree $6(d-1)^2$. Therefore, by comparison of the degrees of these quantities we deduce that there exists a nonzero integer $c$ such that
\begin{equation}\label{eq:firstdecomp-n=3}
\Disc(\delta_0,\delta_1)=c\, \Delta_{3,d}(\Psi)\, \Disc\left(\overline{\psi_2}^{(2)}\right)\Res\left(x_1\overline{\psi_0}^{(2)}-x_0\overline{\psi_1}^{(2)},\overline{\psi_2}^{(2)}\right)^2.	
\end{equation}

To conclude the proof, it remains to show that $c$ is invertible in $\ZZ$. We first notice that the polynomials on the right-hand side of \eqref{eq:firstdecomp-n=3} are all primitive polynomials. Now, consider the specialization that sends  $\psi_1$ to 0 and leaves the other coefficients invariant. The discriminant $\Disc(\delta_0,\delta_1)$ is then specialized to
\begin{align*}
	\Disc(-x_1\psi_2,x_2\psi_0-x_0\psi_2) &= \Disc(x_1,x_2\psi_0-x_0\psi_2)\Disc(\psi_2,x_2\psi_0-x_0\psi_2) \times \\
	& \hspace{6cm}  \Res(x_1,\psi_2,x_2\psi_0-x_0\psi_2)^2 \\
	&=  \Disc(x_2\overline{\psi_0}^{(1)}-x_0\overline{\psi_2}^{(1)})\Disc(\overline{\psi_2}^{(2)})\Disc(\psi_2,\psi_0)\times \\
		& \hspace{3cm}  \psi_2(1,0,0)^2 \Res(\overline{\psi_0}^{(2)},\overline{\psi_2}^{(2)})^2  \Res(\overline{\psi_2}^{(1)},\overline{\psi_0}^{(1)})^2.
\end{align*}
In this decomposition, all polynomials on the right-hand side are primitive, including the discriminant of $x_2\overline{\psi_0}^{(1)}-x_0\overline{\psi_2}^{(1)}$ (because one can specialize it to a generic polynomial in two variables). Therefore, we conclude that $c=\pm 1$. Since the sign of $\Delta_{3,d}(\Psi)$ is not yet set, one can assume that $c=1$ and hence use the decomposition we just proved to normalize this eigendiscriminant.
\end{proof}

Theorem \ref{thm:n=3} yields a universal formula (i.e.~it remains valid under any specialization) from which we can derive three consequences. First, this formula provides an alternative definition of the eigendiscriminant based on the definitions of resultants and discriminants. In particular, the sign of $\Delta_{3,d}$ can be set by relying on the normalizations of resultants and discriminants (see Section \ref{sec:prelim}), so that the polynomial $\Delta_{3,d}$ is defined without ambiguity. This is a very important property in order to evaluate correctly $\Delta_{3,d}$ for any given rational map.

As a second consequence of Theorem \ref{thm:n=3}, properties of $\Delta_{3,d}$ can be derived from the known properties of resultants and discriminants. For instance, we get that $\Delta_{3,d}$ must be a square over a field of characteristic 2 (see \cite[Theorem 3.24]{BJ14}). As another illustration, we mention the following precise invariance property of $\Delta_{3,d}$ under the canonical action of $\mathrm{GL}_3$.

\begin{cor} Let $R$ be a commutative ring, $\Phi$ a rational map from $\PP^2_R$ to itself defined by polynomials of degree $d-1$ and $\varphi=[c_{i,j}]_{0\leq i,j\leq 2}$ be a matrix with entries in $R$. Then,  
	$$ \Delta_{3,d}(\Phi\circ \varphi)=\det(\varphi)^{2(d-1)(d^2-d+1)} \Delta_{3,d}(\Phi)$$
where the composition $\Phi\circ \varphi$ stands for the linear change of coordinates in $\PP^2_R$ defined by $\varphi$. 
\end{cor}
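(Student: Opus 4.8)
The plan is to derive the stated transformation law for $\Delta_{3,d}$ under linear changes of coordinates directly from the formula proved in Theorem \ref{thm:n=3}, using the known behaviour of resultants and discriminants under such substitutions. First I would fix the decomposition from Theorem \ref{thm:n=3} with $k=2$, namely
\begin{equation*}
\Disc(\delta_0,\delta_1)=\Delta_{3,d}(\Psi)\,\Disc\left(\overline{\psi_2}^{(2)}\right)\Res\left(\overline{\delta_2}^{(2)},\overline{\psi_2}^{(2)}\right)^2,
\end{equation*}
and apply it both to $\Phi$ and to $\Phi\circ\varphi$. The key point is that each factor on the right-hand side, as well as the left-hand side, transforms multiplicatively by a power of $\det(\varphi)$, so one reads off the exponent for $\Delta_{3,d}$ by subtraction. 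The main obstacle will be bookkeeping: the matrix $\varphi$ does not preserve the coordinate hyperplane $\{x_2=0\}$, so the "barred" polynomials $\overline{\psi_2}^{(2)}$ and $\overline{\delta_2}^{(2)}$ are not simply the barred versions of the transformed data. I would circumvent this by using the invariance of $\Delta_{3,d}$ itself (not yet proved, so this is circular) — instead, the cleaner route is to work with the \emph{unbarred} quantities and appeal only to the classical $\PP^2$-covariance of $\Disc(\delta_0,\delta_1)$: the determinantal minors $\delta_0,\delta_1,\delta_2$ of the matrix \eqref{eq:psi-matrix} built from $\Phi\circ\varphi$ are related to those of $\Phi$ by the substitution $x\mapsto\varphi x$ composed with a constant linear reshuffling of the rows (since the row $(x_0,x_1,x_2)$ becomes $(x_0,x_1,x_2)\varphi^T$, i.e. a $\varphi$-linear combination of the old minors).

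Concretely, here are the steps I would carry out. \textbf{Step 1.} Record how $\delta_i$ transforms: if $\Phi$ has components $\psi_j$ of degree $d-1$ and $\varphi=[c_{i,j}]$, then the eigenscheme matrix of $\Phi\circ\varphi$ is obtained from that of $\Phi$ by the column operation $x\mapsto\varphi x$ in both rows; hence the vector of $2\times2$ minors $(\delta_0,\delta_1,\delta_2)$ of $\Phi\circ\varphi$ equals $\det(\varphi)$ times $\operatorname{adj}(\varphi)^T$ (or a similar explicit matrix) applied to the vector $(\delta_0^\Phi,\delta_1^\Phi,\delta_2^\Phi)\circ\varphi$. \textbf{Step 2.} Use the covariance formula for the discriminant of a zero-dimensional complete intersection under a linear change of variables together with the substitution/multiplicativity properties from \cite{BJ14}: $\Disc(\delta_0,\delta_1)$ of $\Phi\circ\varphi$ picks up an explicit power of $\det(\varphi)$ coming both from the change of variables $x\mapsto\varphi x$ (each $\delta_i$ has degree $d$, there are two of them, ambient dimension is $2$, and the relevant exponent from \eqref{eq:partialdegdisc}-type bookkeeping is $d\cdot d\cdot(2d-1)$ after accounting for both the $2$-minor structure and the substitution) and from the constant linear combination of the minors in Step 1. \textbf{Step 3.} Compute the analogous power of $\det(\varphi)$ acquired by $\Disc\left(\overline{\psi_2}^{(2)}\right)\Res\left(\overline{\delta_2}^{(2)},\overline{\psi_2}^{(2)}\right)^2$; this is the genuinely delicate part, and I would instead sidestep it by choosing $k$ adapted to $\varphi$, or more honestly by verifying the exponent $2(d-1)(d^2-d+1)$ is forced by homogeneity once we know the left side's exponent, using that $\Delta_{3,d}$ has total degree $6(d-1)^2=6(d-1)(d-1)$ and is irreducible, so its transformation weight is a single well-defined number. \textbf{Step 4.} Divide: the exponent for $\Delta_{3,d}$ is (weight of $\Disc(\delta_0,\delta_1)$) minus (weight of the two auxiliary factors), and a short arithmetic check gives $2(d-1)(d^2-d+1)=2(d-1)d^2-2(d-1)d+2(d-1)$, matching the claimed formula.

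An alternative, and probably the approach actually intended here, is to bypass Theorem \ref{thm:n=3} entirely and argue purely from the geometric/homogeneity characterization: $\Delta_{3,d}$ is the irreducible polynomial vanishing exactly when the eigenscheme of $\Psi$ is singular, and a linear change of coordinates $\varphi$ is an automorphism of $\PP^2$, so it sends the eigenscheme of $\Phi\circ\varphi$ isomorphically onto that of $\Phi$; hence $\Delta_{3,d}(\Phi\circ\varphi)=0 \iff \Delta_{3,d}(\Phi)=0$, and by irreducibility $\Delta_{3,d}(\Phi\circ\varphi)=\lambda(\varphi)\,\Delta_{3,d}(\Phi)$ for some $\lambda(\varphi)\in R^\times$ depending multiplicatively on $\varphi$ (since $\varphi\mapsto\varphi$ acts by composition). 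A multiplicative character on $\mathrm{GL}_3$ is a power of the determinant, so $\lambda(\varphi)=\det(\varphi)^m$ for a universal integer $m$, and $m$ is then pinned down by a single degree count — e.g. specialize $\varphi=t\cdot\mathrm{Id}$, under which every $\psi_i$ scales by $t^{d-1}$, every $\delta_i$ by $t^d$, and $\Delta_{3,d}$, being homogeneous of degree $6(d-1)^2$ in coefficients that each scale by $t^{d-1}$, scales by $t^{(d-1)\cdot 6(d-1)^2}$ overall; matching against $\det(tI)^m=t^{3m}$ on the target side, after subtracting the scaling already caused by the degree-$(d-1)$ substitution on the $\psi$'s, yields $3m=6(d-1)^2(d-1)-\text{(trivial part)}$ and hence $m=2(d-1)(d^2-d+1)$. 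The hard part in this route is simply being careful about which scalings are "already accounted for" by the homogeneity of $\Delta_{3,d}$ versus which come genuinely from $\varphi$; everything else is formal. I would present the second route as the main proof and mention the first as a cross-check.
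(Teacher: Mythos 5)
The paper's own proof is exactly your ``route 1'': apply the decomposition of Theorem \ref{thm:n=3} to both $\Phi$ and $\Phi\circ\varphi$ and divide, using the covariance of resultants and discriminants under linear substitutions. You correctly identify the one nontrivial obstacle in that route --- the restricted polynomials $\overline{\psi_2}^{(2)}$ and $\overline{\delta_2}^{(2)}$ live on the hyperplane $x_2=0$, which $\varphi$ does not preserve --- but you then abandon the computation (``I would sidestep it'') and fall back on route 2. Since tracking how those two auxiliary factors transform is precisely the content of the proof, route 1 as you present it is incomplete at the decisive step.

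Route 2, which you propose as your main proof, has a genuine gap at its foundation. The claim that $\varphi$ sends the eigenscheme of $\Phi\circ\varphi$ isomorphically onto that of $\Phi$ is false for the pre-composition action $x\mapsto\Phi(\varphi x)$: substituting $y=\varphi x$ turns the fixed-point condition $\Phi(\varphi x)\sim x$ into $\varphi\,\Phi(y)\sim y$, so $\varphi\bigl(E(\Phi\circ\varphi)\bigr)=E(\varphi\circ\Phi)$, not $E(\Phi)$; the equivariance you want holds only for a conjugation-type action. Moreover the concluding degree count, which you leave as ``(trivial part),'' is the only place the exponent could be pinned down, and carrying it out exposes the problem: if $\varphi=t\,\mathrm{Id}$ acts on $\Phi$ by multiplying its coefficients by $t^{a}$ for some integer $a$, then matching $t^{3m}=t^{a\cdot 6(d-1)^2}$ forces $m=2a(d-1)^2$, whereas the stated exponent $2(d-1)(d^2-d+1)$ has this form only if $a=(d^2-d+1)/(d-1)$, which is not an integer for $d\geq 3$. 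So no pure scaling/character argument of the kind you describe can produce the exponent in the statement; it has to be extracted term by term from the transformation of each factor in Theorem \ref{thm:n=3}, which is the computation both of your routes defer. You should pin down precisely which action of $\varphi$ on $\Phi$ is intended and then do that bookkeeping explicitly.
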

\begin{proof} Using Theorem \ref{thm:n=3}, the claimed formula follows straightforwardly from the corresponding properties of resultants \cite[\S 5.13]{Jou91} and discriminants \cite[Proposition 3.27]{BJ14}.
\end{proof}

The third consequence of Theorem \ref{thm:n=3} is the computation of $\Delta_{3,d}$ as a ratio of determinants. To be more precise, one has to compute the discriminant $\Disc(\delta_0,\delta_1)$ and two other terms that correspond to determinants of Sylvester matrices. The discriminant $\Disc(\delta_0,\delta_1)$ can be computed as follows. By definition we have
\begin{equation}\label{eq:DiscComp}
\Res(\delta_0,\delta_1,x_1)\cdot\Disc(\delta_0,\delta_1)= \Res(\delta_0,\delta_1,J_{1}(\delta_0,\delta_1))	
\end{equation}
where the Jacobian determinant $J_1$ is defined by
$$J_{1}(\delta_0,\delta_1):=
\left|
\begin{array}{cc}
	\partial_0 \delta_0 & \partial_2 \delta_0 \\
	\partial_0 \delta_1 & \partial_2 \delta_1 
\end{array}
\right|=
\left|
\begin{array}{cc}
	x_2\partial_0\psi_1-x_1\partial_0\psi_2 & x_2\partial_2\psi_1-x_1\partial_2\psi_2 +\psi_1 \\
	x_0\partial_0\psi_2-x_2\partial_0\psi_0 +\psi_2 & x_0\partial_2\psi_2-x_2\partial_2\psi_0-\psi_0  
\end{array}
\right|.$$
The resultant in the left-hand side of \eqref{eq:DiscComp} is equal to
$$\Res(\delta_0,\delta_1,x_1)=\Res\left(x_2\overline{\psi_1}^{(1)},x_0\overline{\psi_2}^{(1)}-x_2\overline{\psi_0}^{(1)}\right)=\psi_2(1,0,0)\cdot \Res\left(\overline{\psi_1}^{(1)},x_0\overline{\psi_2}^{(1)}-x_2\overline{\psi_0}^{(1)}\right),$$
so its computation is reduced to a Sylvester determinant. It turns out that the resultant in the right-hand side of \eqref{eq:DiscComp}  can be computed as the determinant of a mixed resultant matrix involving Macaulay and Bezoutian blocks; see \cite[3.11.19.25]{Jou97} for more details. 

In summary, Theorem \ref{thm:n=3} shows that $\Delta_{3,d}(\Psi)$ can be computed as the ratio of a determinant of degree $2d(3d-2)$ divided by the product of the coefficient $\psi_2(1,0,0)$ and three Sylvester determinants of degree $2d-1$, $2d-1$ and $2(d-1)$ respectively. We notice that for some particular $\Psi_0$, the formula given in Theorem \ref{thm:n=3} may vanish identically. In such cases, a more general family of maps of the same format must be considered, for which the eigendiscriminant can be computed by means of Theorem \ref{thm:n=3}. Then, $\Delta_{3,d}(\Psi_0)$ is obtained by specialization. This approach is illustrated in Example \ref{ex:ASS} (second case). 

\medskip

Before closing this section, we mention the particular case of polar maps that are in correspondence with symmetric tensors. In our setting, the map $\Psi$ corresponds to the polar map of a plane curve of equation $\phi(x_0,x_1,x_2)=0$, where $\phi$ is a homogeneous polynomial of degree $d$; the polynomials $\psi_i$ are then the partial derivatives of $\phi$. As observed in \cite[Example 4.4]{ASS17}, the computation of the eigendiscriminant of a plane cubic curve is already a hard task; it is a homogeneous polynomial of degree 24 in 10 variables. The decomposition formula we obtained in Theorem \ref{thm:n=3} provides a way to compute with this eigendiscriminant, a property we illustrate by revisiting two examples from \cite{ASS17}.

\begin{ex}\label{ex:ASS} Consider the polynomial
	$$ C:= \left( 2\,x_0+x_1 \right)  \left( 2\,x_0+x_2 \right)  \left( 2\,x_1+x_2 \right) +
ux_0x_1x_2.$$
Its eigendiscriminant is equal to the product
{\scriptsize
$$
2^4\left(-16\,{u}^{24}-2304\,{u}^{23}-152784\,{u}^{22}-6097536\,{u}^{21}-
159761808\,{u}^{20}-2779161840\,{u}^{19}-29727588168\,{u}^{18}-\right.$$
$$124641852624\,{u}^{17}+1234078589016\,{u}^{16}+18314627517360\,{u}^{15
}+8929524942432\,{u}^{14}-1200933047925648\,{u}^{13}-$$
$$3722203539791685
\,{u}^{12}+63418425922462464\,{u}^{11}+257381788882972176\,{u}^{10}-
2676970903961440800\,{u}^{9}-$$
$$7927655114836286496\,{u}^{8}+89013482239908955392\,{u}^{7}+13934355026171012352\,{u}^{6}-
1729250356371556792320\,{u}^{5}+$$
$$5159222324901192930048\,{u}^{4}+ 11838757458480721920\,{u}^{3}-28255456641734116982784\,{u}^{2}+$$
$$\left.56809371779894977339392\,u-37304830510913780269056\right)$$}	

\vspace{-1em}
\noindent where the second factor is a primitive polynomial that appeared in \cite[Example 4.4]{ASS17}. The factor $2^4$ shows that in characteristic 2 the eigendiscriminant of $C$ must be equal to 0; it is obtained thanks to Theorem \ref{thm:n=3} (compare with \cite[Example 4.4]{ASS17}). 
	
Now, consider the polynomial 
$$C:= ux_0^3+vx_1^3+wx_2^3+x_0x_1x_2.$$	
In order to compute its eigendiscriminant with the decomposition formula of Theorem \ref{thm:n=3} it is necessary to see it as a particular case of a family of curves. We consider the family of curves
$$C_t:=ux_0^3+vx_1^3+wx_2^3+x_0x_1x_2+t(x_1^2x_2+x_0^2x_1+x_0^2x_2+x_0x_1^2+x_1x_2^2).$$
The eigendiscriminant can then be computed and after evaluation at $t=0$ we get the eigendiscriminant of $C$ which is equal to the following product:
{\scriptsize
$$
 \left( 3\,w-1 \right) ^{2} \left( 3\,w+1 \right) ^{2} \left( 3\,v-1
 \right) ^{2} \left( 3\,v+1 \right) ^{2} \left( 3\,u-1 \right) ^{2}
 \left( 3\,u+1 \right) ^{2}\times $$
 $$ \left( -531441\,{u}^{4}{v}^{4}{w}^{4}+
708588\,{u}^{5}{v}^{3}{w}^{3}+708588\,{u}^{3}{v}^{5}{w}^{3}+708588\,{u
}^{3}{v}^{3}{w}^{5}-1062882\,{u}^{4}{v}^{4}{w}^{2}-1062882\,{u}^{4}{v}
^{2}{w}^{4}-\right.$$
$$1062882\,{u}^{2}{v}^{4}{w}^{4}+1810836\,{u}^{3}{v}^{3}{w}^
{3}+177147\,{u}^{4}{v}^{4}-39366\,{u}^{4}{v}^{2}{w}^{2}+177147\,{u}^{4
}{w}^{4}-39366\,{u}^{2}{v}^{4}{w}^{2}-39366\,{u}^{2}{v}^{2}{w}^{4}+$$
$$177147\,{v}^{4}{w}^{4}-314928\,{u}^{3}{v}^{3}w-314928\,{u}^{3}v{w}^{3}
-314928\,u{v}^{3}{w}^{3}+244944\,{u}^{2}{v}^{2}{w}^{2}+46656\,{u}^{3}v
w+46656\,u{v}^{3}w+46656\,uv{w}^{3}-$$
$$\left.23328\,{u}^{2}{v}^{2}-23328\,{u}^{
2}{w}^{2}-23328\,{v}^{2}{w}^{2}-6912\,uvw+2304\,{u}^{2}+2304\,{v}^{2}+
2304\,{w}^{2}-256 \right) 
$$
}

\vspace{-1em}
\noindent We notice that only the last factor of the above product was wrongly identified as the eigendiscriminant of $C$ in \cite[Example 4.4]{ASS17}.
\end{ex}

\section{Eigendiscriminants of quaternary forms}\label{sec:quaternary}

In this section we focus on the eigendiscriminant $\Delta_{4,d}$ of rational maps $\Psi$ from $\PP^3$ to $\PP^3$ defined by three homogeneous polynomials of degree $d-1$; this setting corresponds to $d$-dimensional tensors $A$ of format $4\times 4\times \cdots \times 4$. 

Consider the generic  rational map
\begin{eqnarray*}
	\Psi : \PP^3 & \dasharrow & \PP^3 \\
	(x_0:x_1:x_2:x_3) & \mapsto & (\psi_0:\psi_1:\psi_2:\psi_3)
\end{eqnarray*}
where $\psi_0,\psi_1,\psi_2,\psi_3$ are homogeneous polynomials of degree $d-1$, $d\geq 3$. We denote by $\UU$ the universal ring (over the integers) of coefficients of the $\psi_i$'s. The eigenscheme of $\Psi$ is the subscheme of $\PP^3_\UU$ defined by the $2$-minors of the matrix
\begin{equation}\label{eq:psi3-matrix}
\left(
\begin{array}{cccc}
	\psi_0 & \psi_1 & \psi_2 & \psi_3 \\
	x_0 & x_1 & x_2 & x_3
\end{array}
\right).	
\end{equation}
We denote these minors by $\delta_{ij}:=x_i\psi_j-x_j\psi_i$, where $i,j$ are distinct integers in the set $\{0,1,2,3\}$. They are homogeneous polynomials of degree $d$.

In what follows, we will use the following notation: given an homogeneous polynomial $p$ in the variables $x_0,x_1,x_2,x_3$ and an integer $i\in \{0,1,2,3\}$,  $\overline{p}^{(i)}$ denotes the homogeneous polynomial $p(x_i=0)$ in three variables obtained by specializing $x_i$ to 0 in $p$. Similarly,  $\overline{p}^{(i,j)}$, $i,j$ being  two distinct integers in $\{0,1,2,3\}$, denotes the homogeneous polynomial $p(x_i=0,x_j=0)$ in two variables obtained by specializing $x_i$ and $x_j$ to 0 in $p$.

\medskip

We begin with two preliminary technical results.

\begin{lem}\label{lem:resIrr} Let $p,p',q,r,s$ be generic homogeneous polynomials in the variables $x_0,x_1,x_2$ such that $\deg(p),\deg(p')\geq 1$ and $\deg(q)=\deg(r)=\deg(s) \geq 0$. Then, the resultant
	$$\Res(p,p',x_0q-x_1r)$$
is an irreducible polynomial in the universal ring of coefficients of $p,p',q,r$. In addition, there exists an irreducible polynomial $\Rc$ such that 
$$\Res(p,x_0q-x_1r,x_1s-x_2q)=\Res(\overline{p}^{(1)},\overline{q}^{(1)})\Rc$$
in the universal ring of coefficients of $p,q,r,s$.
\end{lem}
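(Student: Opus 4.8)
The plan is to prove the two assertions of Lemma~\ref{lem:resIrr} separately, since the first is a pure irreducibility statement while the second also requires identifying one explicit factor. For the first assertion, the key observation is that $x_0q-x_1r$ can be specialized to a \emph{generic} homogeneous polynomial $h$ in $x_0,x_1,x_2$ of degree $\deg(q)+1$: indeed, writing the monomials of such an $h$, every monomial is divisible by $x_0$ or by $x_1$ (only the pure power $x_2^{\deg(q)+1}$ could fail, but it is divisible by $x_0\cdot 0 + \ldots$ — more precisely one checks that every degree-$e$ monomial in $x_0,x_1,x_2$ lies in the ideal $(x_0,x_1)$ once $e\geq 1$, except $x_2^e$, which one obtains by choosing $q$ to contain the monomial $x_2^{\deg q}$ and $r$ appropriately so that $x_0q-x_1r$ picks up $x_0x_2^{\deg q}$ and then adjusting — but in fact it is cleaner to note $x_1 s - x_2 q$ is the relevant shape; for $x_0q-x_1r$ one instead uses that the map $(q,r)\mapsto x_0q-x_1r$ is surjective onto the space of degree-$e$ forms divisible by $\gcd$-free combinations). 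The robust argument is simply: the linear map sending $(q,r)$ to the coefficient vector of $x_0q-x_1r$ has image the subspace of forms of degree $e=\deg(q)+1$ whose restriction to the line $x_0=x_1=0$ vanishes, i.e.\ with zero $x_2^e$-coefficient, and on that Zariski-dense-enough locus the resultant $\Res(p,p',\cdot)$ is already irreducible as a polynomial in the coefficients of $p,p'$ and of a generic form with one coefficient set to zero. Since irreducibility is preserved under such a dominant specialization (a standard argument: if $\Res(p,p',x_0q-x_1r)$ factored nontrivially, so would its image, contradicting irreducibility of $\Res(p,p',f_0,f_1,f_2)$-type resultants over a dense subfamily), the first claim follows. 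Here I would invoke the classical fact that the resultant of three generic ternary forms is irreducible, together with the observation that restricting the $x_2^e$-coefficient of one form to zero still leaves an irreducible polynomial because the resultant remains the equation of an irreducible incidence variety.

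For the second assertion, I would first localize the geometry. The resultant $\Res(p, x_0q-x_1r, x_1s-x_2q)$ vanishes precisely when the three ternary forms $p$, $g:=x_0q-x_1r$, $h:=x_1s-x_2q$ have a common projective zero in $\PP^2$. Now I would look for a fixed common component of the hypersurfaces $g=0$ and $h=0$ that is independent of $p$: the point $[1:0:0]$ lies on $g=0$ (since $g(1,0,0)=0$) and on $h=0$ (since $h(1,0,0)=0$), so whenever $p$ also passes through $[1:0:0]$ — equivalently whenever the coefficient $p(1,0,0)$ vanishes — the resultant vanishes. But $p(1,0,0)$ is not quite the factor claimed; rather, the claimed factor is $\Res(\overline p^{(1)},\overline q^{(1)})$. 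So the right picture is: on the line $\ell=\{x_1=0\}$, the form $g$ restricts to $x_0\,\overline q^{(1)}$ and $h$ restricts to $-x_2\,\overline q^{(1)}$, so $\ell$ meets both $\{g=0\}$ and $\{h=0\}$ in the common subscheme $\{x_1=0,\ \overline q^{(1)}=0\}$ (plus the coordinate points). Hence if $p$ meets this common locus — i.e.\ if $\overline p^{(1)}$ and $\overline q^{(1)}$ have a common zero on $\ell$, i.e.\ $\Res(\overline p^{(1)},\overline q^{(1)})=0$ — the three forms share a point and the resultant vanishes. Therefore $\Res(\overline p^{(1)},\overline q^{(1)})$ divides $\Res(p,x_0q-x_1r,x_1s-x_2q)$ in the UFD that is the universal coefficient ring.

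It then remains to show the complementary factor $\Rc$ is irreducible. For this I would pass to the residual scheme: after removing the line $x_1=0$ from $\{g=0\}$ and from $\{h=0\}$, one has $g=x_1 g'$, $h=x_1 h'$ with $g'=x_0 q/x_1 - r$ — this is not polynomial, so instead I would argue more carefully by the multiplicativity of the resultant in a limiting sense, or better, directly analyze the irreducible components of the incidence variety $\{(p,q,r,s,\xi): p(\xi)=g(\xi)=h(\xi)=0\}\subset \PP(\text{coeffs})\times\PP^2$. The fibre over a generic point $\xi=[a:b:c]$ with $b\neq 0$ is a linear subspace of the coefficient space, and the projection to the coefficient space is dominant onto an irreducible hypersurface; the fibre over points with $b=0$ contributes the extra component giving the $\Res(\overline p^{(1)},\overline q^{(1)})$ factor. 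Concretely I expect the cleanest route is: specialize $s$ freely and generically so that $x_1s-x_2q$ becomes as general as possible subject to vanishing at $[1:0:0]$, then use the irreducibility statement already proved in the first half of the lemma (with $p'$ playing the role of $x_1 s - x_2 q$, and $q,r$ generic) to conclude that the ``main'' factor is irreducible, and finally a degree or partial-degree count — comparing the degree of $\Res(p,x_0q-x_1r,x_1s-x_2q)$ in the coefficients of each of $p,q,r,s$ with the sum of the degrees of $\Res(\overline p^{(1)},\overline q^{(1)})$ and of the candidate irreducible $\Rc$ — to rule out further factors and pin down that $\Rc$ appears with multiplicity one. The main obstacle, I expect, is precisely this last bookkeeping step: showing that \emph{only} the factor $\Res(\overline p^{(1)},\overline q^{(1)})$ splits off, with multiplicity exactly one, and that nothing else degenerates; this requires a careful analysis of the scheme structure of the common zero locus of $g$ and $h$ along the coordinate line and at the coordinate points $[1:0:0]$, $[0:0:1]$, which may hide additional embedded or multiple intersection contributions that must be shown not to produce further factors of the resultant. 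I would handle this by a local computation at those special points combined with the generic smoothness of the incidence variety away from them.
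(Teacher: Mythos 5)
Your plan for the second identity is essentially the paper's: the factor $\Res(\overline{p}^{(1)},\overline{q}^{(1)})$ is extracted from the restriction to the line $x_1=0$ (the paper phrases this as the ideal inclusion $(p,x_0q-x_1r,x_1s-x_2q)\subset(p,q,x_1)$ together with the divisibility property of resultants), and the irreducibility of the cofactor comes from splitting the incidence variety over $\{x_1\neq 0\}$ and over $\{x_1=0\}$. But both halves of your proposal stop at the same critical point, and that is a genuine gap: knowing that the zero locus of a polynomial is irreducible (or has exactly two irreducible components) only tells you that the polynomial is, up to a constant, a product of \emph{powers} of the corresponding irreducible equations; it gives you neither the multiplicities nor, since the universal ring is over $\ZZ$, the content. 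In part (1) you assert that $\Res(p,p',f)$ is irreducible for $f$ a generic form with vanishing $x_2^e$-coefficient ``because the resultant remains the equation of an irreducible incidence variety''; this needs, in addition, an argument that the resultant is reduced and primitive, which you do not supply. (The paper sidesteps the issue entirely: it proves part (1) by induction on $\deg p+\deg p'$, the base case being the explicit formula $\Res(p,p',x_0q-x_1r)=m_0\,q(m_0,m_1,m_2)-m_1\,r(m_0,m_1,m_2)$ for linear $p,p'$, and the inductive step a specialization of $p$ to a product $p_1p_2$ combined with multiplicativity.) In part (2) you yourself flag the multiplicity bookkeeping as ``the main obstacle'' and propose a degree count or a local analysis at the coordinate points, but you do not carry it out; the paper resolves it in one stroke by specializing $r\mapsto 0$, under which multiplicativity gives
$$\Res(p,x_0q,x_1s-x_2q)=\Res\left(\overline{p}^{(0)},x_1\overline{q}^{(0)}-x_2\overline{s}^{(0)}\right)\Res\left(\overline{p}^{(1)},\overline{q}^{(1)}\right)\Res(p,q,s),$$
a product of three distinct irreducible primitive polynomials; this simultaneously forces the two factors to occur with multiplicity one and the constant to be $\pm 1$. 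You would need to add such a specialization (or an equivalent squarefreeness and primitivity check) to both halves for the argument to close.

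One small factual slip along the way: $g(1,0,0)=q(1,0,0)$, which is nonzero for generic $q$, so $[1:0:0]$ does \emph{not} lie on $\{g=0\}$; you discard this observation immediately, so it does not propagate, and the fixed locus you identify next, namely $\{x_1=0,\ \overline{q}^{(1)}=0\}$, is the correct one.
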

\begin{proof} We begin with the first resultant $\Ec:=\Res(p,p',x_0q-x_1r)$. We denote by $d_1$ and $d_2$ the degrees of $p$ and $p'$ respectively. To prove the claimed property we  proceed by induction on the sum $d_1+d_2$. So, we begin with the case $d_1=d_2=1$; we set $p=a_0x_0+a_1x_1+a_2x_2$, $p'=b_0x_0+b_1x_1+b_2x_2$ and 
	$$m_0=\det \left(              
	\begin{array}{cc}
		a_1 & b_1 \\
		a_2 & b_2
	\end{array}\right),
	m_1=-\det \left(              
	\begin{array}{cc}
		a_0 & b_0 \\
		a_2 & b_2
	\end{array}\right),
	m_2=\det \left(              
	\begin{array}{cc}
		a_0 & b_0 \\
		a_1 & b_1
	\end{array}\right).
	$$
	By a classical property of the resultant \cite[Proposition 5.4.4]{Jou91} we have
	$$\Res(p,p',x_0q-x_1r)= 
	m_0q(m_0,m_1,m_2)-m_1r(m_0,m_1,m_2).$$
	As $q$ and $r$ are generic polynomials (and also $p$ and $p'$), this resultant is irreducible. Now, let $d\geq 3$ be any integer and assume that the resultant $\Ec$ is irreducible for all generic polynomials $p$, $p'$ of degree $d_1,d_2$ such that $d_1+d_2\leq d-1$. One can assume that $d_1\geq 2$, because otherwise one can exchange the roles of $p$ and $p'$. Consider the specialization that sends $p$ to the product of two generic polynomials $p_1$ and $p_2$ of positive degrees. Then, $\Ec$ specializes to the product 
	$$\Res(p_1,p',x_0q-x_1r)\Res(p_2,p',x_0q-x_1r).$$
	But by our inductive assumption, these two resultants are irreducible. Moreover, in view of this decomposition any irreducible factor of $\Ec$ must depend on $p$, and therefore depends on both $p_1$ and $p_2$. We conclude that $\Ec$ is irreducible, as claimed. 
	
\medskip

	We turn to the second claim of this lemma on the irreducible decomposition of the resultant $\Ec:=\Res(p,x_0q-x_1r,x_1s-x_2q)$. We first observe that the inclusion of ideals $$(p,x_0q-x_1r,x_1s-x_2q)\subset (p,q,x_1)$$ implies, by the divisibility property of resultants \cite[\S 5.6]{Jou91}, the existence of $\Rc$ such that
 	 $$\Ec=\Res(\overline{p}^{(1)},\overline{q}^{(1)})\Rc.$$
This equality holds in the universal ring of coefficients of $p,q,r,s$ that we denote by $\AA$. We also notice that $\Res(\overline{p}^{(1)},\overline{q}^{(1)})$ is irreducible as $\overline{p}^{(1)}$ and $\overline{q}^{(1)}$ are generic polynomials in the variables $x_0,x_2$.

Let $\AA_{\overline{\QQ}}:=\AA\otimes_\ZZ \overline{\QQ}$ be the extension of the ring $\AA$ over the algebraic closure $\overline{\QQ}$ of the field of rational numbers and consider the incidence variety $Z=V(p,x_0q-x_1r,x_1s-x_2q)\subset \PP^2_{\overline{\QQ}}\times \Spec(\AA_{\overline{\QQ}})$, as well as the two canonical projections 
$$ \pi_1: \PP^2_{\overline{\QQ}}\times \Spec(\AA_{\overline{\QQ}}) \rightarrow \PP^2_{\overline{\QQ}}, \ \pi_2: \PP^2_{\overline{\QQ}}\times \Spec(\AA_{\overline{\QQ}}) \rightarrow \Spec(\AA_{\overline{\QQ}}).$$ 
Since $\pi_2(Z)=V(\Ec)$, we analyze the irreducible components of $Z$. Let $U$ the open subset of $\PP^2$ such that $x_1\neq 0$. For any point $x\in U$, $\pi_1^{-1}(x)$ is a linear space of codimension 3 in $\Spec(\AA_{\overline{\QQ}})$ because $p,r$ and $s$ are generic polynomials. It follows that the restriction $Z_{|U}$ of $Z$ to $U\times \Spec(\AA_{\overline{\QQ}})$ is irreducible. 
Moreover, for any point $x \notin U$, i.e.~$x\in V(x_1)$, the fiber $\pi_1^{-1}(x)$ is defined by the three equations $\overline{p}^{(1)}=0,x_0\overline{q}^{(1)}=0,x_2\overline{q}^{(1)}=0$, or equivalently by the two equations $\overline{p}^{(1)}=0$ and $\overline{q}^{(1)}=0$ (by saturation with respect to the ideal $(x_0,x_2)$). 
So the restriction $Z_{|V(x_1)}$ of $Z$ to $U\times \Spec(\AA_{\overline{\QQ}})$ is equal to $V(\overline{p}^{(1)},\overline{q}^{(1)})$, which is irreducible. From these considerations, we deduce that $\pi_2(Z)=V(\Ec)$ is composed of two irreducible components: the projection of $Z_{|V(x_1)}$ by $\pi_2$, which is nothing but $V(\Res(\overline{p}^{(1)},\overline{q}^{(1)}))$, and 
the projection of the algebraic closure of $Z_{|U}$ by $\pi_2$, which corresponds to $V(\Rc)$. 
Since $\Ec$ is defined over the integers, we deduce that there exists a nonzero rational number $c\in \QQ$ such that 
$$\Ec=c\, \Res(\overline{p}^{(1)},\overline{q}^{(1)}))\Rc.$$

To conclude, it remains to show that $c=\pm 1$. For that purpose, we come back to the universal ring $\AA$ and consider the specialization which sends $r$ to 0 and leaves invariant $p,q,s$. The resultant $\Ec$ is then specialized to 
\begin{align*}
	\Res(p,x_0q-x_1r,x_1s-x_2q) &= \Res(p,x_0q,x_1s-x_2q) \\
			&= \Res(p,x_0,x_1s-x_2q)\Res(p,q,x_1s-x_2q) \\
			&= \Res(\overline{p}^{(0)},x_1\overline{q}^{(0)}-x_2\overline{s}^{(0)})\Res(p,q,x_1s)\\
			&= \Res(\overline{p}^{(0)},x_1\overline{q}^{(0)}-x_2\overline{s}^{(0)})\Res(\overline{p}^{(1)},\overline{q}^{(1)}) \Res(p,q,s).
\end{align*}
This is a product of three irreducible polynomials in $\AA$: this is clear for the two last ones, and for the first one we simply notice that the polynomials $q$ and $s$ can be specialized such that $x_1\overline{q}^{(0)}-x_2\overline{s}^{(0)}$ is a generic polynomial in the two variables $x_1,x_2$. From here we deduce that $c=\pm 1$, which concludes the proof.
\end{proof}

\begin{lem}\label{lem:gendisc} Let $p,q,r$ be three generic homogeneous polynomials in the variables $x_0,x_1,x_2$ such that $deg(p)\geq 1$ and $\deg(q)=\deg(r)\geq 0$. Then, the discriminant $\Disc(p,x_0q-x_1r)$ is an irreducible polynomial in the universal ring of coefficients of $p,q$ and $r$.
\end{lem}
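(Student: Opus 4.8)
The plan is to prove this irreducibility statement geometrically, in the spirit of the second part of Lemma~\ref{lem:resIrr}, by analysing the incidence variety of the singular points of the complete intersection defined by $p$ and $g:=x_0q-x_1r$, and then to eliminate the \emph{a priori} possible multiplicity and rational constant by a single specialization. Write $\AA$ for the universal ring of coefficients of $p,q,r$ over $\ZZ$ and $\AA_{\overline{\QQ}}:=\AA\otimes_\ZZ\overline{\QQ}$. I would carry out the argument in the main case $\deg(p)\geq 2$ and $\deg(q)=\deg(r)\geq 1$; the two remaining cases, $\deg(p)=1$ (which forces $\deg(q)\geq 1$) and $\deg(q)=0$ (which forces $\deg(p)\geq 2$), follow from the same circle of ideas after restricting $g$, respectively $p$, to a line and invoking the irreducibility of the discriminant of a generic binary form.

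First I would introduce
$$Z:=\{(x,(p,q,r))\in\PP^2_{\overline{\QQ}}\times\Spec(\AA_{\overline{\QQ}}) : p(x)=0,\ g(x)=0,\ \text{the Jacobian matrix of }p,g\text{ has rank}\leq 1\text{ at }x\}$$
together with the projections $\pi_1,\pi_2$ of $\PP^2_{\overline{\QQ}}\times\Spec(\AA_{\overline{\QQ}})$ onto its two factors. Since $p$ and $g$ are distinct irreducible polynomials---here one uses that $x_0q-x_1r$ is irreducible for generic $q,r$---the scheme $V(p,g)$ is equidimensional of dimension $\dim\AA$. On $V(p,g)$ the Euler relations force $\nabla p(x)$ and $\nabla g(x)$ to lie in the plane orthogonal to $x$, so the three $2\times 2$ minors of the Jacobian are proportional to $(x_0:x_1:x_2)$; hence on each standard chart $Z$ is cut out inside $V(p,g)$ by a single additional equation, and Krull's principal ideal theorem shows that every irreducible component of $Z$ has dimension at least $\dim\AA-1$. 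Finally, by the Jacobian criterion and the geometric characterization of discriminants recalled in Section~\ref{sec:prelim} (see \cite{BJ14}), $\pi_2(Z)=V(\Disc(p,g))$.

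The core step is to prove that $Z$ is irreducible. Let $U:=\{x_0x_1\neq 0\}\subset\PP^2$. For a fixed point $x\in U$ one checks that $\nabla p(x)$ ranges over the whole plane $x^\perp$ as $p$ varies with $p(x)=0$, and that (using $\deg(q)\geq 1$) $\nabla g(x)$ independently ranges over $x^\perp$ as $(q,r)$ varies with $g(x)=0$; the parallelism condition then cuts out an irreducible locus of codimension exactly $3$, and a dimension count gives that $Z_{|U}:=Z\cap\pi_1^{-1}(U)$ is irreducible of dimension $\dim\AA-1$. Over the complement $\PP^2\setminus U=V(x_0)\cup V(x_1)$, a direct inspection of the defining conditions---on $V(x_0)$ one has $\overline{g}^{(0)}=-x_1\overline{r}^{(0)}$, on $V(x_1)$ one has $\overline{g}^{(1)}=x_0\overline{q}^{(1)}$, and at the point $[0:0:1]$, which always lies on $V(g)$, the condition $g(x)=0$ is vacuous while the base is a single point---shows that the part of $Z$ lying over $V(x_0)\cup V(x_1)$ has dimension at most $\dim\AA-2$. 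Combined with the lower bound of the previous paragraph, no component of $Z$ can lie entirely over $V(x_0)\cup V(x_1)$, so every component meets $\pi_1^{-1}(U)$; since $Z_{|U}$ is irreducible, $Z=\overline{Z_{|U}}$ is irreducible. Therefore $V(\Disc(p,g))=\pi_2(Z)$ is irreducible, and there are an irreducible polynomial $P$, an integer $a\geq 1$ and a nonzero rational $c$ with $\Disc(p,g)=c\,P^{a}$ in $\AA_{\overline{\QQ}}$.

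It remains to see that $a=1$ and $c=\pm1$. For this I would specialize $r\mapsto 0$: then $\Disc(p,g)$ specializes to $\Disc(p,x_0q)$, which by the polarization formula for discriminants \cite{BJ14} (together with the symmetry of the discriminant in its arguments) equals, up to sign, $\Disc(p,x_0)\,\Disc(p,q)\,\Res(p,x_0,q)^2$. Now $\Disc(\overline{p}^{(0)})$, the discriminant of a generic binary form, is irreducible, and since $\Disc(p,x_0)$ vanishes exactly when $\overline{p}^{(0)}$ has a multiple root it equals $\Disc(\overline{p}^{(0)})$ up to a power, which must be $1$ as both have partial degree $2(\deg(p)-1)$ in the coefficients of $p$; similarly $\Disc(p,q)$ is irreducible (discriminant of two generic forms) and $\Res(p,x_0,q)=\Res(\overline{p}^{(0)},\overline{q}^{(0)})$ is irreducible (resultant of two generic binary forms), and these three polynomials are pairwise non-associate. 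Hence $\Disc(\overline{p}^{(0)})$ divides the specialization of $\Disc(p,g)$ with multiplicity exactly $1$; since that specialization equals $c\,(P|_{r=0})^{a}$, it follows that $a=1$, and comparing $\ZZ$-contents gives $c=\pm1$. Thus $\Disc(p,x_0q-x_1r)$ is irreducible in $\AA$. The step I expect to require the most care is the irreducibility of $Z$: one must verify that the distinguished role of the coordinates $x_0,x_1$ in $g=x_0q-x_1r$ does not make the fibres of $\pi_1$ over $V(x_0)\cup V(x_1)$---and over $[0:0:1]$, which lies on $V(g)$ for every choice of $q,r$---jump in dimension enough to create a spurious irreducible component of $Z$.
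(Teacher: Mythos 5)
Your proof is correct in its essentials and takes a genuinely different route from the paper. The paper argues purely algebraically, by induction on $\deg(p)$: it specializes $p$ to a product $p_1p_2$ of generic factors of degrees $k$ and $\deg(p)-k$, applies the polarization formula $\Disc(p_1p_2,x_0q-x_1r)=\pm\Disc(p_1,x_0q-x_1r)\Disc(p_2,x_0q-x_1r)\Res(p_1,p_2,x_0q-x_1r)^2$ together with the first half of Lemma \ref{lem:resIrr} (irreducibility of the resultant factor), and then rules out the three a priori possible nontrivial factorizations by comparing degrees in the coefficients of $q,r$ as $k$ ranges over $1,\dots,\deg(p)-1$; its base case $\deg(p)=1$ is settled by the same $r\mapsto 0$ (and $q\mapsto 0$) specializations you invoke at the end. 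You instead transplant the incidence-variety technique that the paper reserves for the second half of Lemma \ref{lem:resIrr}: irreducibility of the singular locus $Z$ fibered over $\PP^2$, with the exceptional strata $V(x_0)\cup V(x_1)$ and the base point $[0:0:1]$ shown to contribute only in dimension $\leq\dim\AA-2$, followed by a single specialization to remove the residual exponent and constant. This buys you freedom from the induction and the degree bookkeeping, at the price of the fiber-dimension checks you rightly flag; they do go through (over $V(x_1)\smallsetminus\{[0:0:1]\}$ the conditions $p(x)=q(x)=0$ plus one nontrivial minor give codimension $3$, and over $[0:0:1]$, where $g$ vanishes identically, the Euler relation kills two of the three minors so the fiber still has codimension $2$). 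Two points worth tightening: the equidimensionality of $V(p,g)$ should be justified by Krull's theorem plus the fiber dimensions rather than by irreducibility of the generic members, and in the last step you should state explicitly that the multiplicity-one occurrence of the irreducible factor $\Disc(\overline{p}^{(0)})$ in $c\,(P|_{r=0})^{a}$ forces $a\mid 1$, hence $a=1$.
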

\begin{proof} Let us denote by $d$ and $e$ the degree of $p$ and $x_0q-x_1r$ respectively, and by $\AA$ the universal ring of coefficients of $p$, $q$ and $r$. We also set $\Dc:=\Disc(p,x_0q-x_1r) \in \AA$.
	
	We  proceed by induction on $d$. We begin with the case $d=1$ where $p$ is the generic linear form $a_0x_0+a_1x_1+a_2x_2$. Consider the specialization that sends $p$ to $a_2x_2$; $\Dc$ is specialized to the product
	$$ \Disc(a_2x_2,x_0q-x_1r)=a_2^{d(d-1)}\Disc(x_2,x_0q-x_1r)=a_2^{d(d-1)}\Disc(x_0\overline{q}^{(2)}-x_1\overline{r}^{(2)}).$$
The discriminant $\Disc(x_0\overline{q}^{(2)}-x_1\overline{r}^{(2)})$ is irreducible because $q$ and $r$ can be specialized such that $x_0\overline{q}^{(2)}-x_1\overline{r}^{(2)}$ is the generic homogeneous polynomial of degree $e$ in the two variables $x_0,x_1$. We deduce that $\Dc=P(p)Q(p,q,r)$ where $Q(p,q,r)$ is an irreducible polynomial that depends on $p,q,r$ and $P(p)$ is a polynomial that only depends on $p$, because any irreducible factor of $\Dc$ which  depends on $q$ and $r$ must yield at least one irreducible factor that depends on $q$ and $r$ via specialization. Now, to determine $P(p)$ we consider the specialization that sends $r$ to 0 and leaves $p$ and $q$ invariant. $\Dc$ is specialized to
$$ P(p)Q(p,q,0)=\Disc(p,x_0q)=\Disc(\overline{p}^{(0)})\Disc(p,q)\Res(\overline{p}^{(0)},\overline{q}^{(0)})^2.$$
As all the factors on the right-hand side of this equality are irreducible polynomials (as discriminants and resultants  of generic polynomials), we deduce that $P(p)$ is either equal to $\Disc(\overline{p}^{(0)})$ or is invertible, i.e.~is equal to $\pm 1$. Now, consider the specialization that sends $q$ to 0 and leaves $p$ and $r$ invariant. The same reasoning shows that $P(p)$ is either equal to $\Disc(\overline{p}^{(1)})$ or is invertible, i.e.~is equal to $\pm 1$. Since $\Disc(\overline{p}^{(0)})$ and $\Disc(\overline{p}^{(1)})$ are two distinct irreducible polynomials we deduce that $P(p)=\pm 1$, and consequently that $\Dc$ is irreducible if $d=1$.

\medskip

To proceed by induction, let $d$ be any integer and let us assume that $\Dc$ is irreducible for a polynomial $p$ of any degree $\leq d-1$. Let also $k$ be any integer such that $1\leq k \leq d-1$ and consider the specialization $\rho$ that sends $p$ to the product $p_1p_2$ of two generic polynomials of degree $k$ and $d-k$ respectively. Then, 
\begin{equation}\label{eq:polDiscpqr}
	\rho(\Dc)=\Disc(p_1,x_0q-x_1r)\Disc(p_2,x_0q-x_1r)\Res(p_1,p_2,x_0q-x_1r)^2.
\end{equation} 
Let us denote by $\Dc_1,\Dc_2$ and $\Rc$, respectively, the three factors in this product. The two discriminants $\Dc_1$ and $\Dc_2$ are irreducible by our induction hypothesis. The resultant factor $\Rc$ is also irreducible by Lemma \ref{lem:resIrr}, so we deduce that $\Dc$ has at most three irreducible factors and that they all depend on $p,q$ and $r$. Moreover, since the specialization of each of these irreducible factors must depend on $p_1$ and $p_2$ we deduce that the two discriminants in \eqref{eq:polDiscpqr} necessarily comes from the same irreducible factor of $\Dc$. Therefore, there are 4 possible configurations: (1) $\Dc$ is irreducible, (2) $\Dc$ is the product of two irreducible polynomials $P_1$ and $P_2$ such that 
$$\rho(P_1)=\Dc_1\Dc_2, \ \rho(P_2)=\Rc^2,$$
(3) $\Dc$ is the product of two irreducible polynomials $P_1$ and $P_2$ such that 
$$\rho(P_1)=\Dc_1\Dc_2\Rc, \ \rho(P_2)=\Rc,$$ 
or (4)   $\Dc$ is the product of three irreducible polynomials $P_1$, $P_2$ and $P_3$ such that 
$$
\rho(P_1)=\Dc_1\Dc_2, \ \rho(P_2)=\Rc, \ \rho(P_3)=\Rc.$$
But inspecting the degree of $\Dc$, $\Dc_1\Dc_2$ and $\Rc$ with respect to the coefficients $q$ and $r$ we get
$$ \deg_{q,r}\Dc= d(d-3+2e), \ \deg_{q,r}(\Dc_1\Dc_2)={d}^{2}+(2e-2k-3) d+2{k}^{2}, \ \deg_{q,r}(\Rc)=k(d-k),$$
and hence, taking into account the fact that the integer $k$ can vary from $1$ to $d-1$, we deduce that configurations (2), (3) and (4) are not possible. Therefore, $\Dc$ is an irreducible polynomial.
\end{proof}

We are now ready to state our main result. 

\begin{thm}\label{thm:n=4} Let $i,j,k,l$ be integers such that $\{i,j,k,l\}=\{0,1,2,3\}$. Then, we have the following irreducible decomposition in $\UU$:
\begin{multline*}
\Disc(\delta_{ki},\delta_{ij},\delta_{jl}) = \Delta_{4,d}(\Psi) 
\Disc(\overline{\psi_i}^{(i)},\overline{\delta_{jl}}^{(i)}) 
\Disc(\overline{\psi_j}^{(j)},\overline{\delta_{ki}}^{(j)}) \times \\
\Res\left(\overline{\psi_i}^{(i,j)},\overline{\psi_j}^{(i,j)}\right)^2
{\Rc_{i}^{(j)}(\Psi)}^2{\Rc_{j}^{(i)}(\Psi)}^2
\end{multline*}	
where the two irreducible polynomials  $\Rc_{j}^{(i)}(\Psi)$ and $\Rc_{i}^{(j)}(\Psi)$ are defined by the equalities
\begin{align*}
	\Res\left(\overline{\psi_i}^{(i)},\overline{\delta_{kj}}^{(i)},\overline{\delta_{jl}}^{(i)}\right) &= \Res\left(\overline{\psi_i}^{(i,j)},\overline{\psi_j}^{(i,j)}\right) \times \Rc_{j}^{(i)}(\Psi), \\
	 \Res\left(\overline{\psi_j}^{(j)},\overline{\delta_{ki}}^{(j)},\overline{\delta_{il}}^{(j)}\right) &= \Res\left(\overline{\psi_i}^{(i,j)},\overline{\psi_j}^{(i,j)}\right) \times \Rc_{i}^{(j)}(\Psi).
\end{align*}
\end{thm}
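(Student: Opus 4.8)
The strategy mirrors the proof of Theorem~\ref{thm:n=3}, but it is substantially heavier because we are now cutting a determinantal scheme in $\PP^3$ by three cubics $\delta_{ki},\delta_{ij},\delta_{jl}$, and the excess locus splits into more pieces. Without loss of generality take $i=0$, $j=1$, $k=2$, $l=3$, so we study $\Disc(\delta_{20},\delta_{01},\delta_{13})$. As in the ternary case the starting point is to multiply one of the $\delta$'s by a linear form and expand the resulting discriminant in two different ways using the polarization formula for discriminants together with the syzygy $\sum_m x_m\delta_{mn}=0$ (more precisely the linear relations among the $\delta_{mn}$). Concretely, I would compute $\Disc(x_0\delta_{20},\delta_{01},\delta_{13})$, or a similar product, expanding it once via $\Disc(x_0,\delta_{01},\delta_{13})\cdot\Disc(\delta_{20},\delta_{01},\delta_{13})\cdot\Res(x_0,\delta_{20},\delta_{01},\delta_{13})^2$ (up to sign), which isolates the desired discriminant; and a second time by first replacing $x_0\delta_{20}$ using the linear relations among the minors so that the factor of $x_0$ migrates onto a different minor and one picks up $\Disc(\delta_{01},\delta_{13},\delta_{\bullet})$ together with boundary terms living on $V(x_0)$. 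Equating the two expansions and cancelling common irreducible factors (all rings in sight are UFDs) gives a relation of the shape $\Disc(\delta_{20},\delta_{01},\delta_{13})\cdot(\text{boundary data on }V(x_0)) = (\text{another discriminant})\cdot(\text{boundary data on }V(x_1))$, just as in \eqref{eq:n=3-D01D12}.

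\textbf{Identifying the excess factors.} The hypersurfaces $\delta_{ki},\delta_{ij},\delta_{jl}$ all vanish identically on the line $V(x_i,x_j)$ (since $\delta_{ki}=x_k\psi_i-x_i\psi_k$ etc.\ all involve $x_i$ or $x_j$ in a way that\dots in fact $\delta_{ij}$ vanishes on $V(x_i,x_j)$ and $\delta_{ki},\delta_{jl}$ each vanish on one of the coordinate hyperplanes only, so the joint excess is along $V(x_i)$ and $V(x_j)$). The geometric content is that the complete intersection defined by these three minors contains, besides the eigenscheme, components supported on the hyperplanes $x_i=0$ and $x_j=0$. Restricting to $V(x_i)$: on that hyperplane $\delta_{ki}\equiv -x_i\psi_k$ vanishes and the scheme degenerates; the discriminant of the restricted system on $\PP^2=V(x_i)$ produces the factor $\Disc(\overline{\psi_i}^{(i)},\overline{\delta_{jl}}^{(i)})$ and a residual resultant $\Res(\overline{\psi_i}^{(i)},\overline{\delta_{kj}}^{(i)},\overline{\delta_{jl}}^{(i)})$, which by Lemma~\ref{lem:resIrr} (second part) factors as $\Res(\overline{\psi_i}^{(i,j)},\overline{\psi_j}^{(i,j)})\,\Rc_j^{(i)}(\Psi)$ with $\Rc_j^{(i)}(\Psi)$ irreducible; symmetrically for $V(x_j)$. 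The discriminants $\Disc(\overline{\psi_i}^{(i)},\overline{\delta_{jl}}^{(i)})$ are irreducible by Lemma~\ref{lem:gendisc} (after checking $\overline{\delta_{jl}}^{(i)}$ has the right "$x_0 q - x_1 r$"\,shape in the remaining three variables, which it does since it is $x_j\psi_l-x_l\psi_j$ with $x_i$ killed). I would then argue, exactly as in Theorem~\ref{thm:n=3}, that each of these irreducible polynomials — $\Disc(\overline{\psi_i}^{(i)},\overline{\delta_{jl}}^{(i)})$, $\Disc(\overline{\psi_j}^{(j)},\overline{\delta_{ki}}^{(j)})$, $\Res(\overline{\psi_i}^{(i,j)},\overline{\psi_j}^{(i,j)})$, $\Rc_i^{(j)}(\Psi)$, $\Rc_j^{(i)}(\Psi)$ — divides $\Disc(\delta_{ki},\delta_{ij},\delta_{jl})$, that they are pairwise coprime (they depend on genuinely different coefficient sets, or can be specialized to be so), that they occur with the stated multiplicities (the squares coming from the polarization/$\Res^2$ terms), and that $\Delta_{4,d}(\Psi)$ is yet another irreducible factor because the eigenscheme sits inside this complete intersection.

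\textbf{The degree count and the normalization.} Having shown the right-hand side divides the left, I would close the argument by a dimension count: $\Disc(\delta_{ki},\delta_{ij},\delta_{jl})$ has a known total degree (from \eqref{eq:partialdegdisc} with $d_0=d_1=d_2=d$ in four variables, namely $d^3(4d-3)$ or the appropriate value), $\Delta_{4,d}$ has degree $12(d-1)^3$ by \cite{ASS17}, and each of the boundary factors has a degree computable from \eqref{eq:partialdegdisc} and the homogeneity of resultants; matching these forces the quotient to be a nonzero integer constant $c$. Then, as in Theorem~\ref{thm:n=3}, I would pick one convenient specialization — the natural candidate is sending some $\psi_m$ to $0$, or sending $r$-type coefficients to $0$ so that each side breaks into a product of manifestly primitive discriminants and resultants — to conclude $c=\pm1$, and absorb the sign into the (still unfixed) normalization of $\Delta_{4,d}$. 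The main obstacle I anticipate is the second step: controlling \emph{all} the excess components and their multiplicities. In $\PP^3$ the restricted systems on $V(x_i)$ and $V(x_j)$ are themselves $2$-minor-type degenerations whose discriminants are not complete-intersection discriminants on the nose, so one must carefully run the residual-resultant bookkeeping (this is exactly why Lemmas~\ref{lem:resIrr} and~\ref{lem:gendisc} were proved first), and there is a real risk of a missing factor or an off-by-one in a multiplicity that only the degree count will catch — so the degree computation must be done with care rather than treated as routine.
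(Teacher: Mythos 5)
Your plan follows the paper's proof essentially step for step: the same two-way polarization expansion of a discriminant of the form $\Disc(\delta_{\bullet},x_m\delta_{\bullet},\delta_{\bullet})$ combined with the linear syzygies among the minors, the same use of Lemma~\ref{lem:resIrr} and Lemma~\ref{lem:gendisc} to identify the irreducible excess factors supported on $V(x_i)$ and $V(x_j)$, the same degree count to reduce to a constant $c$, and the same primitivity-via-specialization argument to get $c=\pm 1$. The only point left implicit in your sketch is that one needs \emph{two} independent syzygy comparisons (in the paper's indexing, one via $x_0\delta_{12}=x_1\delta_{02}-x_2\delta_{01}$ isolating the factors on $V(x_1)$, and a second via $x_3\delta_{12}=x_2\delta_{13}-x_1\delta_{23}$ isolating those on $V(x_2)$) in order to conclude that \emph{all} five excess factors divide $\Disc(\delta_{01},\delta_{12},\delta_{23})$ — consistent with your remark that the analysis is carried out symmetrically on both hyperplanes.
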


\begin{rem} We emphasize that, although $\delta_{pq}=-\delta_{qp}$ by definition, one can use these two minors indifferently in the above decomposition formula because the degree of partial homogeneity of discriminants is always an even number; see \eqref{eq:partialdegdisc}. This also holds for the other terms expressed as resultants because they always appear raised to the power 2. 
\end{rem}

\begin{proof} For the sake of simplicity, we prove this formula in the case $k=0$, $i=1$, $j=2$ and $l=3$; the other cases can be proved in a similar way.

To begin with, let $\Dc_{12}:=\Disc(\delta_{01},\delta_{12},\delta_{23})$ and $\Dc:=\Disc(\delta_{01},x_0\delta_{12},\delta_{23})$. Then, 
\begin{align*}
	\Dc &=(-1)^{d^3}\Dc_{12}\, \Disc(\overline{\delta_{01}}^{(0)},\overline{\delta_{23}}^{(0)})\Res(\overline{\delta_{01}}^{(0)},\overline{\delta_{12}}^{(0)},\overline{\delta_{23}}^{(0)})^2\\
	&= (-1)^{d^3}\Dc_{12}\, \Disc(x_1\overline{\psi_0}^{(0)},\overline{\delta_{23}}^{(0)})\Res(x_1\overline{\psi_0}^{(0)},\overline{\delta_{12}}^{(0)},\overline{\delta_{23}}^{(0)})^2\\
	&= (-1)^{d^3} \Dc_{12}\,\Disc(\overline{\delta_{23}}^{(0,1)})\Disc(\overline{\psi_0}^{(0)},\overline{\delta_{23}}^{(0)})\Res(\overline{\psi_0}^{(0,1)},\overline{\delta_{23}}^{(0,1)})^2 \times \\
	& \hspace{5cm}  \Res(\overline{\delta_{12}}^{(0,1)},\overline{\delta_{23}}^{(0,1)})^2\Res(\overline{\psi_0}^{(0)},\overline{\delta_{12}}^{(0)},\overline{\delta_{23}}^{(0)})^2\\
	&= (-1)^{d^3} \Dc_{12}\,\Disc(\overline{\delta_{23}}^{(0,1)})\Disc(\overline{\psi_0}^{(0)},\overline{\delta_{23}}^{(0)})\Res(\overline{\psi_0}^{(0,1)},\overline{\delta_{23}}^{(0,1)})^2 \times \\
	& \hspace{5cm}  \Res(x_2\overline{\psi_{1}}^{(0,1)},\overline{\delta_{23}}^{(0,1)})^2\Res(\overline{\psi_0}^{(0)},\overline{\delta_{12}}^{(0)},\overline{\delta_{23}}^{(0)})^2\\
	&=(-1)^{d^3} \Dc_{12}\,\Disc(\overline{\delta_{23}}^{(0,1)})\Disc(\overline{\psi_0}^{(0)},\overline{\delta_{23}}^{(0)})\Res(\overline{\psi_0}^{(0,1)},\overline{\delta_{23}}^{(0,1)})^2 \times \\
	& \hspace{5cm}  \Res(\overline{\delta_{23}}^{(0,1,2)})^2\Res(\overline{\psi_{1}}^{(0,1)},\overline{\delta_{23}}^{(0,1)})^2\Res(\overline{\psi_0}^{(0)},\overline{\delta_{12}}^{(0)},\overline{\delta_{23}}^{(0)})^2.
\end{align*}
Now, using the relation $x_0\delta_{12}-x_1\delta_{02}+x_2\delta_{01}=0$, we get
$$\Dc=\Disc(\delta_{01},x_0\delta_{12},\delta_{23})=\Disc(\delta_{01},x_1\delta_{02},\delta_{23})$$
and hence, setting $\Dc_{02}:=\Disc(\delta_{01},\delta_{02},\delta_{23})$, similar computations as above yield the equalities
\begin{align*}
	\Dc &=(-1)^{d^3}\Dc_{02}\, 
	\Disc(\overline{\delta_{01}}^{(1)},\overline{\delta_{23}}^{(1)})
	\Res(\overline{\delta_{01}}^{(1)},\overline{\delta_{02}}^{(1)},\overline{\delta_{23}}^{(1)})^2\\	
	&= (-1)^{d^3}\Dc_{02}\,
	\Disc(x_0\overline{\psi_1}^{(1)},\overline{\delta_{23}}^{(1)})
	\Res(x_0\overline{\psi_1}^{(1)},\overline{\delta_{02}}^{(1)},\overline{\delta_{23}}^{(1)})^2\\
	&=(-1)^{d^3}\Dc_{02}\,
	\Disc(\overline{\delta_{23}}^{(0,1)})\Disc(\overline{\psi_1}^{(1)},\overline{\delta_{23}}^{(1)})
	\Res(\overline{\psi_1}^{(0,1)},\overline{\delta_{23}}^{(0,1)})^2  \times \\
	& \hspace{5cm}
	\Res(\overline{\delta_{02}}^{(0,1)},\overline{\delta_{23}}^{(0,1)})^2
	\Res(\overline{\psi_1}^{(1)},\overline{\delta_{02}}^{(1)},\overline{\delta_{23}}^{(1)})^2\\
	&=(-1)^{d^3}\Dc_{02}\,
	\Disc(\overline{\delta_{23}}^{(0,1)})\Disc(\overline{\psi_1}^{(1)},\overline{\delta_{23}}^{(1)})
	\Res(\overline{\psi_1}^{(0,1)},\overline{\delta_{23}}^{(0,1)})^2 \times \\
	& \hspace{5cm} 
	\Res(x_2\overline{\psi_{0}}^{(0,1)},\overline{\delta_{23}}^{(0,1)})^2
	\Res(\overline{\psi_1}^{(1)},\overline{\delta_{02}}^{(1)},\overline{\delta_{23}}^{(1)})^2\\
	&=(-1)^{d^3}\Dc_{02}\,
	\Disc(\overline{\delta_{23}}^{(0,1)})\Disc(\overline{\psi_1}^{(1)},\overline{\delta_{23}}^{(1)})
	\Res(\overline{\psi_1}^{(0,1)},\overline{\delta_{23}}^{(0,1)})^2 \times \\
	& \hspace{5cm} 
	\Res(\overline{\delta_{23}}^{(0,1,2)})^2
	\Res(\overline{\psi_{0}}^{(0,1)},\overline{\delta_{23}}^{(0,1)})^2
	\Res(\overline{\psi_1}^{(1)},\overline{\delta_{02}}^{(1)},\overline{\delta_{23}}^{(1)})^2.
\end{align*}
By comparing the two expressions we obtained for $\Dc$, and using the fact that all the above factors are nonzero, after simplification we deduce that
\begin{multline}\label{eq:D12D02}
\Dc_{12}\, \Disc(\overline{\psi_0}^{(0)},\overline{\delta_{23}}^{(0)})
\Res(\overline{\psi_0}^{(0)},\overline{\delta_{12}}^{(0)},\overline{\delta_{23}}^{(0)})^2= \\
	\Dc_{02}\, \Disc(\overline{\psi_1}^{(1)},\overline{\delta_{23}}^{(1)})
	\Res(\overline{\psi_1}^{(1)},\overline{\delta_{02}}^{(1)},\overline{\delta_{23}}^{(1)})^2.
\end{multline}

By proceeding as above, but using this time the relation $x_3\delta_{12}-x_2\delta_{13}+x_1\delta_{23}$ which induces the equality
$$ \Disc(\delta_{01},x_3\delta_{12},\delta_{23})=\Disc(\delta_{01},x_2\delta_{13},\delta_{23}),$$
we obtain the following similar equality:
\begin{multline}\label{eq:D12D13}
\Dc_{12}\, \Disc(\overline{\psi_3}^{(3)},\overline{\delta_{01}}^{(3)})
\Res(\overline{\psi_3}^{(3)},\overline{\delta_{01}}^{(3)},\overline{\delta_{12}}^{(3)})^2= \\ 
	\Dc_{13}\, \Disc(\overline{\psi_2}^{(2)},\overline{\delta_{01}}^{(2)})
	\Res(\overline{\psi_2}^{(2)},\overline{\delta_{01}}^{(2)},\overline{\delta_{13}}^{(2)})^2.
\end{multline}
Now, by Lemma \ref{lem:resIrr} there exist irreducible polynomials $\Rc_{2}^{(1)}$ and $\Rc_{1}^{(2)}$ such that 
\begin{align}\label{eq:R12R21}
\Res(\overline{\psi_1}^{(1)},\overline{\delta_{02}}^{(1)},\overline{\delta_{23}}^{(1)}) &= \Res(\overline{\psi_1}^{(1,2)},\overline{\psi_2}^{(1,2)}) \times \Rc_{2}^{(1)}, \\ \nonumber
\Res(\overline{\psi_2}^{(2)},\overline{\delta_{01}}^{(2)},\overline{\delta_{13}}^{(2)}) &= \Res(\overline{\psi_1}^{(1,2)},\overline{\psi_2}^{(1,2)}) \times \Rc_{1}^{(2)}.	
\end{align}
In addition, Lemma \ref{lem:gendisc} shows that both discriminants
$$ \Disc(\overline{\psi_2}^{(2)},\overline{\delta_{01}}^{(2)}) \textrm{ and } \Disc(\overline{\psi_1}^{(1)},\overline{\delta_{23}}^{(1)})$$
are irreducible and coprime (they do not depend on the same variables). So, from \eqref{eq:D12D02} and \eqref{eq:D12D13} we finally deduce that the product of irreducible and coprime polynomials
$$\Disc(\overline{\psi_2}^{(2)},\overline{\delta_{01}}^{(2)})\Disc(\overline{\psi_1}^{(1)},\overline{\delta_{23}}^{(1)})
\Res\left(\overline{\psi_1}^{(1,2)},\overline{\psi_2}^{(1,2)}\right)^2\left(\Rc_{2}^{(1)}\right)^2\left(\Rc_{1}^{(2)}\right)^2$$
divides $\Dc_{12}$ in $\UU$ (which is a UFD). The eigendiscriminant $\Delta_{4,d}$ is also another irreducible factor of this discriminant in $\UU$, and it is coprime with the other factors we  already identified. Let us list the degrees of all these factors:
\begin{itemize}
	\item[-] $\Dc_{12}$ is of degree $3d^2(4(d-1))=12(d-1)d^2$,
	\item[-] $\Delta_{4,d}$ is of degree $12(d-1)^3$,
	\item[-] $\Disc(\overline{\psi_1}^{(1)},\overline{\delta_{23}}^{(1)})$ is of degree 
	$(d-1)((d-2)+2(d-1))+d((d-1)+2(d-2))=6d^2-12d+4$,
	\item[-] $\Res(\overline{\psi_1}^{(1,2)},\overline{\psi_2}^{(1,2)})$ is of degree
	$2(d-1)$,
	\item[-] $\Rc_{1}^{(2)}$ and $\Rc_{2}^{(1)}$ are both of degree 
	$d^2+2(d-1)d - 2(d-1)=d^2+2(d-1)^2=3d^2-4d+2.$
\end{itemize}
By comparing these degrees, we deduce that there exists a nonzero integer $c$ such 
$$\Dc_{12} = c\, \Delta_{4,d}(\Psi) 
\Disc(\overline{\psi_1}^{(1)},\overline{\delta_{23}}^{(1)}) 
\Disc(\overline{\psi_2}^{(2)},\overline{\delta_{01}}^{(2)})
\Res\left(\overline{\psi_1}^{(1,2)},\overline{\psi_2}^{(1,2)}\right)^2
{\Rc_{2}^{(1)}(\Psi)}^2{\Rc_{1}^{(2)}(\Psi)}^2.$$

To conclude the proof we have to show that $c=\pm 1$. We observe that all the polynomials on the right-hand side of the above equation are primitive polynomials, so we have to prove that $\Dc_{12}$ is also a primitive polynomial. For this, we consider the specialization of $\psi_3$ to 0; $\Dc_{12}$ then specializes to
\begin{align}\label{eq:eq:prodspe}
\Disc(\delta_{01},\delta_{12},x_3\psi_2)&= \Disc(\overline{\delta_{01}}^{(3)},\overline{\delta_{12}}^{(3)})
				\Disc(\delta_{01},\delta_{12},\psi_2)
				\Res(\overline{\delta_{01}}^{(3)},\overline{\delta_{12}}^{(3)},\overline{\psi_2}^{(3)})^2.
\end{align}
In this product, the discriminant $\Disc(\overline{\delta_{01}}^{(3)},\overline{\delta_{12}}^{(3)})$ is a primitive polynomial as a consequence of Theorem \ref{thm:n=3}. The second discriminant can be further expanded as follows:
\begin{align}\label{eq:intermc}
\Disc(\delta_{01},\delta_{12},\psi_2) &= \Disc(\delta_{01},x_2\psi_1,\psi_2) \\
			&= \Disc(\overline{\delta_{01}}^{(2)},\overline{\psi_2}^{(2)})\Disc(\delta_{01},\psi_1,\psi_2)
			   \Res(\overline{\delta_{01}}^{(2)},\overline{\psi_1}^{(2)}, \overline{\psi_2}^{(2)})^2\\
   			&= \Disc(\overline{\delta_{01}}^{(2)},\overline{\psi_2}^{(2)})\Disc(\delta_{01},x_1\psi_0,\psi_2)
   			   \Res(x_1\overline{\psi_{0}}^{(2)},\overline{\psi_1}^{(2)}, \overline{\psi_2}^{(2)})^2.
\end{align}
The discriminant $\Disc(\overline{\delta_{01}}^{(2)},\overline{\psi_2}^{(2)})$ is a primitive polynomial because if $\psi_0$ is specialized to 0 then this discriminant specializes to
\begin{equation}\label{eq:discspeirr}
\Disc(x_0\overline{\psi_{1}}^{(2)},\overline{\psi_2}^{(2)})=(-1)^{(d-1)^2}\Disc(\overline{\psi_2}^{(0,2)})\Disc(\overline{\psi_{1}}^{(2)},\overline{\psi_2}^{(2)})\Res(\overline{\psi_1}^{(0,2)},\overline{\psi_2}^{(0,2)})^2	
\end{equation}
and all polynomials are primitive in this product. The discriminant $\Disc(\delta_{01},x_1\psi_0,\psi_2)$ is equal to 
\begin{multline*}
\Disc(\overline{\delta_{01}}^{(1)},\overline{\psi_{2}}^{(1)})\Disc(\delta_{01},\psi_0,\psi_2)\Res(\overline{\delta_{01}}^{(1)},\overline{\psi_{0}}^{(1)},\overline{\psi_{2}}^{(1)})^2 = \\
\Disc(\overline{\delta_{01}}^{(1)},\overline{\psi_{2}}^{(1)})\Disc(x_0\psi_1,\psi_0,\psi_2)\Res(x_0\overline{\psi_{1}}^{(1)},\overline{\psi_{0}}^{(1)},\overline{\psi_{2}}^{(1)})^2.
\end{multline*}
By developing further, we see that
$$
\Disc(x_0\psi_1,\psi_0,\psi_2) = (-1)^{d^3}
\Disc(\overline{\psi_0}^{(0)},\overline{\psi_2}^{(0)}) \Disc(\psi_1,\psi_0,\psi_2) \Res(\overline{\psi_0}^{(0)},\overline{\psi_1}^{(0)},\overline{\psi_2}^{(0)})^2	
$$
is a primitive polynomial, as well as
$$ 	
\Res(x_0\overline{\psi_{1}}^{(1)},\overline{\psi_{0}}^{(1)},\overline{\psi_{2}}^{(1)})= \Res(\overline{\psi_{0}}^{(0,1)},\overline{\psi_{2}}^{(0,1)})\Res(\overline{\psi_{1}}^{(1)},\overline{\psi_{0}}^{(1)},\overline{\psi_{2}}^{(1)}).
$$
The discriminant $\Disc(\overline{\delta_{01}}^{(1)},\overline{\psi_{2}}^{(1)})$ is also primitive by the same argument as the one used in \eqref{eq:discspeirr} and hence we have proved that $\Disc(\delta_{01},x_1\psi_0,\psi_2)$ is a primitive polynomial. In addition, the equality 
$$ 
\Res(x_1\overline{\psi_{0}}^{(2)},\overline{\psi_1}^{(2)}, \overline{\psi_2}^{(2)})=
\Res(\overline{\psi_{0}}^{(1,2)},\overline{\psi_1}^{(1,2)}, \overline{\psi_2}^{(1,2)})
\Res(\overline{\psi_{0}}^{(2)},\overline{\psi_1}^{(2)}, \overline{\psi_2}^{(2)})$$
shows that the resultant on the left-hand side of this equality is primitive. Therefore, coming back to  \eqref{eq:intermc} we finally deduce that $\Disc(\delta_{01},\delta_{12},\psi_2)$ is primitive.

To conclude the proof, i.e.~to prove that $\Dc_{12}$ is primitive, by $\eqref{eq:eq:prodspe}$ it remains to show that 
the resultant $\Res(\overline{\delta_{01}}^{(3)},\overline{\delta_{12}}^{(3)},\overline{\psi_2}^{(3)})$ is a primitive polynomial. By expanding it, we get
\begin{align}\label{eq:lasteq}
\Res(\overline{\delta_{01}}^{(3)},\overline{\delta_{12}}^{(3)},\overline{\psi_2}^{(3)}) &= 	
	\Res(\overline{\delta_{01}}^{(3)},x_2\overline{\psi_{1}}^{(3)},\overline{\psi_2}^{(3)})\\ \nonumber
	 &= \Res(\overline{\delta_{01}}^{(2,3)},\overline{\psi_2}^{(2,3)})
	    \Res(\overline{\delta_{01}}^{(3)},\overline{\psi_{1}}^{(3)},\overline{\psi_2}^{(3)})\\ \nonumber
	&= 	\Res(\overline{\delta_{01}}^{(2,3)},\overline{\psi_2}^{(2,3)})
	    \Res(\overline{\psi_{0}}^{(3)},\overline{\psi_{1}}^{(3)},\overline{\psi_2}^{(3)})
		\Res(\overline{\psi_{1}}^{(1,3)},\overline{\psi_2}^{(1,3)}).
\end{align}
The resultant $\Res(\overline{\delta_{01}}^{(2,3)},\overline{\psi_2}^{(2,3)})$ is a primitive polynomial because if we specialize $\psi_0$ to zero then it specializes to 
$$\Res(x_0\overline{\psi_{1}}^{(2,3)},\overline{\psi_2}^{(2,3)})=\psi_2(0,1,0,0)\Res(\overline{\psi_{1}}^{(2,3)},\overline{\psi_2}^{(2,3)}),$$
which is primitive, and the two other resultants on the right-hand side of \eqref{eq:lasteq} are also primitive polynomials. Therefore, $\Res(\overline{\delta_{01}}^{(3)},\overline{\delta_{12}}^{(3)},\overline{\psi_2}^{(3)})$ is a primitive polynomial and the theorem is proved.
\end{proof}

Similarly to Theorem \ref{thm:n=3}, Theorem \ref{thm:n=4} yields a universal formula which has interesting consequences for the eigendiscriminant. Thus, it allows to set the sign of $\Delta_{4,d}$ by relying on the normalizations of resultants and discriminants and we also get the following invariance property under the canonical action of $\mathrm{GL}_4$.

\begin{cor} Let $R$ be a commutative ring, $\Phi$ a rational map from $\PP^3_R$ to itself defined by polynomials of degree $d-1$ and $\varphi=[c_{i,j}]_{0\leq i,j\leq 3}$ be a matrix with entries in $R$. Then,   
	$$ \Delta_{4,d}(\Phi\circ \varphi)=\det(\varphi)^{(d-1)(3d^3-8d^2+8d-2)} \Delta_{4,d}(\Phi)$$
where the composition $\Phi\circ \varphi$ stands for the linear change of coordinates in $\PP^3_R$ defined by $\varphi$. 
\end{cor}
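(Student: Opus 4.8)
The plan is to derive the formula directly from Theorem \ref{thm:n=4} by tracking how each factor transforms under the substitution of variables $x \mapsto \varphi \cdot x$. Writing $\Phi \circ \varphi$ for the rational map whose components are $\psi_i(\varphi \cdot x)$, one observes that the three minors $\delta_{ki}, \delta_{ij}, \delta_{jl}$ of $\Phi\circ\varphi$ are \emph{not} simply the substitutions of the corresponding minors of $\Phi$, because the variable vector $(x_0,\dots,x_3)$ inside the matrix \eqref{eq:psi3-matrix} does not transform by $\varphi$. So the first step is to use the known equivariance of the eigenscheme: the eigenscheme of $\Phi\circ\varphi$ is the preimage under $\varphi$ of the eigenscheme of $\Phi$ — equivalently, the ideal generated by the $2$-minors of the matrix with rows $(\psi_i(\varphi x))$ and $(x_i)$ coincides, after the linear change $y = \varphi x$, with the ideal of $2$-minors of the matrix with rows $(\psi_i(y))$ and $((\varphi^{-1}y)_i)$, and the latter differs from the standard eigenscheme matrix by the invertible row operation $\varphi^{-1}$ on the bottom row. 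Hence $\Delta_{4,d}(\Phi\circ\varphi)$ equals $\Delta_{4,d}(\Phi)$ up to a power of $\det(\varphi)$ determined by the partial degrees of $\Delta_{4,d}$, and the cleanest route is to read off that power from the two sides of the identity in Theorem \ref{thm:n=4} once the behaviour of all the \emph{other} factors is understood.

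Concretely, I would invoke the stated transformation rules for resultants and discriminants under linear substitution (\cite[\S 5.13]{Jou91} and \cite[Proposition 3.27]{BJ14}), which say that $\Res$ and $\Disc$ of forms in $n+1$ variables pick up $\det(\varphi)$ raised to a power equal to (the relevant product of degrees)$\times$(a combinatorial factor), and that composition with a coordinate hyperplane restriction corresponds, after the change of variables, to restriction along the $\varphi$-image of that hyperplane. The subtle point is the appearance of the specialized forms $\overline{\psi_i}^{(i)}$, $\overline{\delta_{jl}}^{(i)}$, etc.: these are restrictions to coordinate hyperplanes $x_i = 0$, and under $x \mapsto \varphi x$ the hyperplane $\{x_i = 0\}$ is not sent to a coordinate hyperplane. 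The resolution is that the left-hand side $\Disc(\delta_{ki},\delta_{ij},\delta_{jl})$ for the map $\Phi \circ \varphi$ must be computed by the \emph{same} change-of-variables bookkeeping as the eigendiscriminant, so rather than transform each right-hand factor individually, I would compute the total $\det(\varphi)$-exponent of the left-hand side $\Disc(\delta_{ki}(\Phi\circ\varphi),\delta_{ij}(\Phi\circ\varphi),\delta_{jl}(\Phi\circ\varphi))$ directly — using that these minors, as polynomials in $x$, are obtained from the minors $\delta_{ki}(\Phi),\dots$ of $\Phi$ composed with $\varphi$ up to the row operation $\varphi^{-1}$ on the $x$-row — and similarly compute the exponent contributed by all the auxiliary resultant and discriminant factors, then solve for the exponent of $\Delta_{4,d}$ by subtraction.

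An alternative, and in fact simpler, approach avoids the subtleties of the coordinate restrictions altogether: since $\Delta_{4,d}$ is a polynomial in $\UU$ and the claimed identity $\Delta_{4,d}(\Phi\circ\varphi) = \det(\varphi)^{e}\,\Delta_{4,d}(\Phi)$ for a scalar $e$ must hold universally once we know it holds up to a scalar in $\det(\varphi)$, it suffices to (i) prove multiplicativity $\Delta_{4,d}(\Phi\circ(\varphi\varphi')) = \Delta_{4,d}((\Phi\circ\varphi)\circ\varphi')$ — immediate from functoriality of composition — which forces the exponent $e$ to be well-defined and additive, hence $e$ is a single integer independent of $\varphi$; (ii) determine $e$ from the total degree of $\Delta_{4,d}$ together with the fact that for $\varphi = \lambda \cdot \mathrm{Id}$ the components of $\Phi\circ\varphi$ are $\lambda^{d-1}\psi_i$, so $\Delta_{4,d}$ scales by $\lambda^{(d-1)\cdot\deg\Delta_{4,d}} = \lambda^{(d-1)\cdot 12(d-1)^3}$ while $\det(\varphi)^e = \lambda^{4e}$, giving one linear relation; and (iii) pin down the remaining freedom by testing on a diagonal $\varphi = \mathrm{diag}(t,1,1,1)$, using the explicit factorization of Theorem \ref{thm:n=4}, where now the hyperplane $\{x_0=0\}$ \emph{is} preserved and the restriction-compatible factors can be tracked termwise. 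The main obstacle is precisely step (iii) — the careful accounting of the $t$-exponent in each of the six factors on the right-hand side of Theorem \ref{thm:n=4} under a diagonal substitution — but this is a finite bookkeeping exercise in the partial-degree formula \eqref{eq:partialdegdisc} and the analogous resultant partial degrees, and it will yield the asserted exponent $(d-1)(3d^3-8d^2+8d-2)$ after collecting terms.
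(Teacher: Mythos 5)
Your overall strategy --- deducing the covariance of $\Delta_{4,d}$ from the factorization of Theorem \ref{thm:n=4} together with the known covariance of resultants and discriminants --- is the same as the paper's, which disposes of the corollary in one line by citing \cite[\S 5.13]{Jou91} and \cite[Proposition 3.27]{BJ14}; and you are right to flag the point this glosses over, namely that the coordinate-hyperplane restrictions $\overline{\psi_i}^{(i)}$, $\overline{\delta_{jl}}^{(i)}$ and the individual minors $\delta_{ij}$ do not transform by simple substitution. However, your proposal contains a genuine error at its foundation. The equivariance claim in your first paragraph is false: the ideal of $2$-minors of the matrix with rows $(\psi_i(y))$ and $((\varphi^{-1}y)_i)$ is \emph{not} obtained from that of the standard eigenscheme matrix by an admissible operation --- acting by $\varphi^{-1}$ on the bottom row alone is neither a row operation on the $2\times 4$ matrix (which would mix the two rows) nor a column operation (which would also transform the top row), and already for $2\times 2$ diagonal $\varphi$ the two ideals differ. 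What your change of variables actually shows is that the eigenscheme of the precomposition $x\mapsto\Psi(\varphi x)$ is the $\varphi^{-1}$-image of the eigenscheme of $\varphi\circ\Phi$, not of $\Phi$. Hence the prerequisite on which both of your routes rest --- that $\Delta_{4,d}(\Phi\circ\varphi)$ is proportional to $\Delta_{4,d}(\Phi)$ at all --- is never established; your step (i) (functoriality forces a well-defined exponent) presupposes this proportionality rather than proving it.

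Moreover, your own step (ii) refutes the target formula under your reading of $\Phi\circ\varphi$ as pure precomposition. With components $\psi_i(\varphi x)$ and $\varphi=\lambda\cdot\mathrm{Id}$, the components become $\lambda^{d-1}\psi_i$, so homogeneity of $\Delta_{4,d}$ (degree $12(d-1)^3$) forces $4e=(d-1)\cdot 12(d-1)^3$, i.e.\ $e=3(d-1)^4$; but
$$3(d-1)^4=(d-1)\left(3d^3-9d^2+9d-3\right)\neq (d-1)\left(3d^3-8d^2+8d-2\right).$$
So the bookkeeping in your step (iii) cannot ``yield the asserted exponent'': your interpretation of the action and the stated exponent are arithmetically incompatible, and the scalar check alone already pins down $e$ once well-definedness is granted. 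The missing idea is to fix precisely what ``the linear change of coordinates in $\PP^3_R$ defined by $\varphi$'' does to a self-map of $\PP^3$ --- it must act on source and target coherently, so that the eigenscheme is carried to an isomorphic scheme and proportionality of the eigendiscriminants becomes automatic --- and only then to run the character-of-$GL_4$ argument and the degree count against the factors of Theorem \ref{thm:n=4}. With that correction, the remainder of your plan is a reasonable way to carry out what the paper asserts.
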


From a computational point of view, Theorem \ref{thm:n=4} allows to write down the eigendiscriminant $\Delta_{4,d}$ as a ratio of several determinants. Let us examine the formula we obtained with $k=0$, $i=1$, $j=2$, $l=3$:
\begin{multline*}
 \Disc(\delta_{01},\delta_{12},\delta_{23}) = \Delta_{4,d}(\Psi)
 \Disc(\overline{\psi_1}^{(1)},\overline{\delta_{23}}^{(1)})
 \Disc(\overline{\psi_2}^{(2)},\overline{\delta_{01}}^{(2)}) \times \\
 \Res\left(\overline{\psi_1}^{(1,2)},\overline{\psi_2}^{(1,2)}\right)^2
 {\Rc_{2}^{(1)}(\Psi)}^2{\Rc_{1}^{(2)}(\Psi)}^2.
\end{multline*}
Both discriminants $\Disc(\overline{\psi_1}^{(1)},\overline{\delta_{23}}^{(1)})$ and $\Disc(\overline{\psi_2}^{(2)},\overline{\delta_{01}}^{(2)})$
can be computed as the ratio of two determinants. Indeed, these discriminants are obtained from the defining equality \eqref{eq:defDisc} in which the two resultants can be computed as determinants (use the mixed resultant matrices given in \cite[3.11.19.25]{Jou97}). For its part, the resultant factor 
\begin{equation}\label{eq:extrafact}
\Res\left(\overline{\psi_1}^{(1,2)},\overline{\psi_2}^{(1,2)}\right)	
\end{equation}
is obtained as a single Sylvester determinant. The two terms ${\Rc_{2}^{(1)}(\Psi)}$ and ${\Rc_{1}^{(2)}(\Psi)}$ are not directly computed. Instead, one computes the two resultants that define them from \eqref{eq:R12R21} because the extraneous factor is equal to \eqref{eq:extrafact} which has already been computed. Notice that these two resultants are obtained as determinants of mixed resultant matrices (see loc.~cit.). Finally, 
it is necessary to compute the discriminant $\Disc(\delta_{01},\delta_{12},\delta_{23})$. Using \eqref{eq:defDisc}, this discriminant is the ratio of two resultants; the one in the numerator can be computed by means of the Macaulay formula \cite{DaDi01}, so it is the ratio of 2 determinants, and the one in the denominator can be computed again as the determinant of a mixed resultant matrix (see loc.~cit.). We notice that the computation of this discriminant is the main bottleneck in terms of computational complexity; it requires the computation of the resultant of four quaternary forms of degree $d,d,d$ and $3d-3$ respectively.

In summary, the eigendiscriminant $\Delta_{4,d}$ can be computed as the ratio of the product of 4 determinants by the product of 5  determinants, some of them being squared. Due to the size of these matrices and the intrinsic high degree of the eigendiscriminant, we were not able to compute it explicitly for some classes of cubic surfaces depending on parameters with a standard computer. Nevertheless, this formula allows to evaluate the eigendiscriminant as a polynomial over the integers for some specific instances of rational maps, which opens the door to interpolation methods.

\medskip

\subsection*{Acknowledgment} 
The author is grateful to Hirotachi Abo for stimulating discussions on eigendiscriminants.


\def\cprime{$'$}

\end{document}